\newcommand{\bt}{\begin{Theorem}}
\newcommand{\et}{\end{Theorem}}
\newcommand{\bi}{\begin{itemize}}
\newcommand{\ei}{\end{itemize}}
\newcommand{\bea}{\begin{eqnarray}}
\newcommand{\eea}{\end{eqnarray}}
\theoremstyle{plain}
\newtheorem{theorem}{\sc Theorem}[section]
\newtheorem{Lemma}{\sc Lemma}[section]
\newtheorem{Proposition}{\sc Proposition}[section]
\newtheorem{Corollary}{\sc Corollary}[section]
\theoremstyle{definition}
\theoremstyle{remark}
\newcommand{\be}{\begin{equation}}
\newcommand{\ee}{\end{equation}}
\title{Multiplicity-free homogeneous operators in the Cowen-Douglas class}
\author{Adam Kor\'{a}nyi}
\address{Lehman College\\
The City University of New York\\
Bronx, NY 10468
}
\email{adam.koranyi@lehman.cuny.edu}
\author{Gadadhar Misra}
\address{Department of Mathematics \\
Indian Institute of Science\\
Bangalore 560 012
 }
\email{gm@math.iisc.ernet.in}
\thanks{This research was supported in part by a DST -
NSF S\&T Cooperation Programme}
\begin{document}
\begin{abstract}

In a recent paper, the authors have constructed a large class of
operators in the Cowen-Douglas class {Cowen-Douglas class}
 of the unit disc $\mathbb D$
which are {\em homogeneous} with respect to the action of the group
M\"{o}b -- the M\"{o}bius group consisting of bi-holomorphic
automorphisms of the unit disc $\mathbb{D}$. The {\em associated
representation} for each of these operators is {\em multiplicity
free}.\index{multiplicity free} Here we give a different
independent construction of all
homogeneous operators in the Cowen-Douglas class with multiplicity
free associated representation and verify that they are exactly the
examples constructed previously.

\end{abstract}
\maketitle
The homogeneous operators form a class of bounded operators $T$ on a
Hilbert space ${\mathcal H}$.  The operator $T$ is said to be {\em
homogeneous} if its spectrum is contained in the closed unit disc
and for every M\"{o}bius transformation $g$ the operator $g(T)$,
defined via the usual holomorphic functional calculus, is unitarily
equivalent to $T$. To every homogeneous irreducible operator $T$
there corresponds an {\em associated}  unitary representation $\pi$
of the  universal covering group $\tilde{G}$ of the M\"{o}bius group
$G$:
$$\pi(\hat{g})^*\, T \, \pi(\hat{g}) =  (p\hat{g})\,(T),\: \hat{g} \in \tilde{G},$$
where $p:\tilde{G} \to G$ is the natural homomorphism.  In the paper
\cite{KM}
(see also \cite{BisGM}),
it was shown that  each
homogeneous operator $T$, not necessarily irreducible, in ${\rm
B}_{m+1}(\mathbb{D})$ admits an associated representation. The
representations of $\tilde{G}$ are quite well-known, but we are
still far from a complete description of the homogeneous operators.
In the recent paper \cite{KM},
the following theorem was proved.

\begin{theorem} \label{thkm1}
For any positive real number $\lambda > m/2$, $m\in \mathbb{N}$ and
an $(m+1)$ - tuple of positive reals ${\boldsymbol \mu}=(\mu_0,
\mu_1, \ldots , \mu_m)$ with $\mu_0=1$, there exists a reproducing
kernel $K^{(\lambda, {\boldsymbol \mu})}$ on the unit disc such that
the adjoint of the multiplication operator $M^{(\lambda,{\boldsymbol
\mu})}$ on the corresponding Hilbert space $\mathbf
A^{(\lambda,{\boldsymbol \mu})}(\mathbb D)$ is homogeneous. The
operators $(M^{(\lambda,{\boldsymbol \mu})})^*$ are in the
Cowen-Douglas class\index{Cowen-Douglas class} ${\rm B}_{m+1}(\mathbb{D})$, irreducible and
mutually inequivalent.
\end{theorem}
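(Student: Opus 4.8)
The plan is to realise the operator $(M^{(\lambda,\m)})^{*}$ directly from a quasi-invariant reproducing kernel and then to extract homogeneity, membership in the Cowen--Douglas class, irreducibility and the classification from properties of that kernel. First I would write down a candidate $(m+1)\times(m+1)$ matrix-valued kernel on $\D\times\D$ of weighted Bergman type,
\[
K^{(\lambda,\m)}(z,w)=\frac{1}{(1-z\bar w)^{2\lambda}}\Big(\!\!\Big(\mu_i\mu_j\,c_{ij}(\lambda)\,p_{ij}\big(z,\bar w,(1-z\bar w)^{-1}\big)\Big)\!\!\Big)_{i,j=0}^{m},
\]
where the scalars $c_{ij}(\lambda)$ and the polynomials $p_{ij}$ are to be chosen so that there is a holomorphic $GL(m+1,\C)$-valued multiplier $g\mapsto\Phi_g(\cdot)$ on $\Mob$ with
\[
K^{(\lambda,\m)}(gz,gw)=\Phi_g(z)\,K^{(\lambda,\m)}(z,w)\,\Phi_g(w)^{*},\qquad g\in\Mob .
\]
The multiplier $\Phi_g$ should be assembled from the scalar automorphy factor $g'(z)^{\lambda}$ of the discrete series together with the $m$-jet of $g$ (an upper triangular unipotent block); the functional equation this imposes on the $c_{ij}(\lambda)$ pins them down uniquely, and one checks that the hypothesis $\lambda>m/2$ is exactly the range in which the resulting Hermitian form is positive semidefinite. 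Positivity I would establish either by a Cholesky-type factorisation $K^{(\lambda,\m)}=L^{*}DL$ with $D>0$, or by exhibiting an explicit orthogonal system of monomial-type vectors spanning $\mathbf A^{(\lambda,\m)}(\D)$ and computing its Gram matrix.

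Granting the quasi-invariance, the standard dictionary of Cowen--Douglas theory produces a projective unitary representation $\pi$ of $\widetilde G$ on $\mathbf A^{(\lambda,\m)}(\D)$, acting by $(\pi(\hat g)f)(z)=\Phi_{g^{-1}}(z)\,f(g^{-1}z)$ with the $\lambda$-powers lifted to the covering group, and this representation intertwines $M^{(\lambda,\m)}$ with all its Möbius transforms; passing to adjoints shows $(M^{(\lambda,\m)})^{*}$ is homogeneous with associated representation $\pi$. Decomposing $\pi$ over the rotation subgroup identifies it as $\bigoplus_{k=0}^{m}D^{+}_{\lambda-\frac m2+k}$, a direct sum of $m+1$ \emph{pairwise inequivalent} holomorphic discrete series representations -- here again $\lambda>m/2$ is precisely what keeps every parameter positive -- so $\pi$ is multiplicity free, and the unordered set $\{\lambda-\tfrac m2,\dots,\lambda+\tfrac m2\}$ of parameters is a unitary invariant that recovers $\lambda$.

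Next come the Cowen--Douglas conditions. Since $K^{(\lambda,\m)}(\cdot,w)$ is holomorphic, $K^{(\lambda,\m)}(z,\cdot)$ anti-holomorphic, and $K^{(\lambda,\m)}(w,w)$ invertible for every $w\in\D$, the usual reproducing-kernel arguments give: the point evaluations on $\mathbf A^{(\lambda,\m)}(\D)$ are bounded; $M^{(\lambda,\m)}$ is bounded (by homogeneity together with the closed graph theorem, or a direct estimate); $\ker\big((M^{(\lambda,\m)})^{*}-\bar w\big)=\{K^{(\lambda,\m)}(\cdot,w)\zeta:\zeta\in\C^{m+1}\}$ has dimension $m+1$; $M^{(\lambda,\m)}-w$ is bounded below, so $(M^{(\lambda,\m)})^{*}-\bar w$ is onto; and the eigenvectors span because the kernel functions are total. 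Hence $(M^{(\lambda,\m)})^{*}\in{\rm B}_{m+1}(\D)$. For irreducibility I would analyse the commutant $\{M^{(\lambda,\m)},(M^{(\lambda,\m)})^{*}\}'$: an idempotent in it is a holomorphic bundle endomorphism commuting with $\pi$, hence lies in $\mathrm{End}_{\widetilde G}(\pi)\cong\C^{m+1}$ (one scalar per isotypic summand); but the off-diagonal entries of $K^{(\lambda,\m)}$, which are nonzero precisely because every $\mu_i>0$, couple the summands so that the only such endomorphism also commuting with multiplication by $z$ is a scalar. Equivalently, one checks that the curvature of the eigenbundle at $0$ is not block diagonal.

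Finally, mutual inequivalence follows from the completeness of the curvature and its covariant derivatives as a unitary invariant for Cowen--Douglas operators (equivalently, of the normalised reproducing kernel near $0$): a finite jet of this invariant at the origin, computed from $K^{(\lambda,\m)}$, recovers $\lambda$ (for instance from the trace of the curvature, consistently with the preceding paragraph) and then, using the normalisation $\mu_0=1$ to fix the overall scaling, recovers $\mu_1,\dots,\mu_m$ from successive off-diagonal coefficients; so distinct pairs $(\lambda,\m)$ give inequivalent operators. I expect the real obstacle to lie in the first step: guessing the coefficients $c_{ij}(\lambda)$ that make $K^{(\lambda,\m)}$ simultaneously quasi-invariant and positive definite, and proving the positivity cleanly -- this is where $\lambda>m/2$ genuinely enters and where a workable closed form (or orthogonal basis) for $\mathbf A^{(\lambda,\m)}(\D)$ is needed. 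Once the kernel is in hand, homogeneity, the ${\rm B}_{m+1}(\D)$ membership and irreducibility are fairly mechanical, with only the inequivalence computation requiring some care.
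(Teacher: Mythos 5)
Your overall strategy --- produce a quasi-invariant positive definite kernel, read off homogeneity from the multiplier, get ${\rm B}_{m+1}(\mathbb D)$ membership from the standard reproducing-kernel dictionary, and settle irreducibility and inequivalence from the commutant and the curvature --- has the right shape, and your identification of the associated representation as $\bigoplus_{j=0}^{m}D^{+}_{\lambda-m/2+j}$ with $\lambda>m/2$ as the positivity threshold agrees with the paper. But there are two genuine problems. First, your ansatz, in which the $(i,j)$ entry of $K^{(\lambda,{\boldsymbol \mu})}$ is $\mu_i\mu_j\,c_{ij}(\lambda)\,p_{ij}$ with $c_{ij}$ and $p_{ij}$ independent of ${\boldsymbol \mu}$, makes the kernel equal to $D({\boldsymbol \mu})\,K^{(\lambda,\mathbf 1)}(z,w)\,D({\boldsymbol \mu})$ for the constant diagonal matrix $D({\boldsymbol \mu})={\rm diag}(\mu_0,\dots,\mu_m)$. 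By the change-of-trivialization formula $\tilde K(z,w)=\varphi(z)K(z,w)\varphi(w)^{*}$ with $\varphi$ constant, all of these kernels yield mutually unitarily equivalent multiplication operators, so with a kernel of this form the ``mutually inequivalent'' clause of the theorem would be false. The correct ${\boldsymbol \mu}$-dependence, which the paper derives, is $K^{(\lambda,{\boldsymbol \mu})}=\sum_{j=0}^{m}\mu_j^{2}K_j$, where each $K_j$ is the corner-supported kernel of the $j$-th irreducible summand $\mathcal H^{(j)}$; the $\mu_j$ enter as relative weights of the discrete-series summands inside one space of $\mathbb C^{m+1}$-valued functions, not as a diagonal rescaling of the vector components (compare the $m=1$ formula \eqref{K2}, whose bottom-right entry is a sum of two terms, only one carrying $\mu_1^{2}$).

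Second, the step you explicitly defer --- pinning down the coefficients so that quasi-invariance and positivity hold simultaneously, with $\lambda>m/2$ as the exact range --- is the actual content of the theorem, and the proposal does not carry it out. The paper closes this gap without guessing: it classifies the homogeneous holomorphic Hermitian bundles by holomorphic induction (a representation $\varrho$ of $\mathfrak t$ with $\varrho(h)$ diagonal and $\varrho(y)=S_m$), computes the multiplier and the infinitesimal operators $H,E,F$, and then invokes the $\mathrm{SL}(2,\mathbb R)$ norm recursion $\|(-F)^{k}\varepsilon_j\|^{2}=(2\lambda_j+k-1)\,k\,\|(-F)^{k-1}\varepsilon_j\|^{2}$ with $\lambda_j=\lambda-\tfrac m2+j$; positivity of all these ratios is exactly $2\lambda>m$, the orthonormal basis is written in closed form, and the kernel is obtained by explicitly summing $\sum_n e_n(z)e_n(w)^{*}$, which proves convergence, positivity, and the identification with the kernels of \cite{KM} all at once. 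Until you either run an analogous computation or otherwise determine the coefficients and prove positivity, the existence claim is unproved; and the inequivalence argument also requires the corrected ${\boldsymbol \mu}$-structure before any curvature jet at the origin can separate the operators.
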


In the paper \cite{KM},
we have presented the operators
$M^{(\lambda,{\boldsymbol \mu})}$ in as elementary a way as
possible, but this presentation hides the natural ways in which
these operators can be found to begin with. Here we will describe
another independent construction of the operators
$M^{(\lambda,{\boldsymbol \mu})}$. We will also give an exposition
of some of the fundamental background material. Finally, we will
prove that if $T$ is an irreducible homogeneous operator in ${\rm
B}_{m+1}(\mathbb{D})$ whose associated representation is
multiplicity free\index{multiplicity free}
then, up to equivalence, $T$ is the adjoint of of
the  multiplication operator $M^{(\lambda, {\boldsymbol \mu})}$ for
some $\lambda > m/2$ and ${\boldsymbol \mu} \geq 0$.

\section{Background material}
Although, we intend to discuss homogeneous operators in the
Cowen-Douglas class\index{Cowen-Douglas class} $\mathrm B_n(\mathbb D)$, the material below is
presented in somewhat greater generality.  Here we discuss commuting
tuples of operators in the Cowen-Douglas class ${\mathrm
B}_n(\mathcal D)$ for some bounded open connected set $\mathcal
D\subseteq \mathbb C^m$.  The unitary equivalence class of a
commuting tuple in ${\mathrm B}_n(\mathcal D)$ is in one to one
correspondence with a certain class of holomorphic Hermitian vector
bundles (hHvb) on $\mathcal D$
\cite{C-D}.
These are distinguished
by the property, among others, that the Hermitian structure on the
fibre at $w\in \mathcal D$ is induced by a reproducing kernel $K$.
It is shown in \cite{C-D}
that the corresponding operator can be
realized as the adjoint of the   commuting tuple multiplication
operator $\mathbf M$ on the Hilbert space $\mathcal H$ of
holomorphic functions with reproducing kernel $K$.

Start with a Hilbert space $\mathcal H$ of $\mathbb C^n$ - valued
holomorphic functions on a bounded open connected set $\mathcal D
\subseteq \mathbb C^m$.  Assume that the Hilbert space $\mathcal H$
contains the set of vector valued polynomials and that these form a
dense subset in $\mathcal H$.  We also assume that there is a
reproducing kernel $K$ for $\mathcal H$.  We use the notation
$K_w(z):= K(z,w)$.

Recall that a positive definite kernel $K: \mathcal{D} \times \mathcal{D}
\to \mathbb C^{n \times n}$ on $\mathcal{D}$ defines an inner product on
the linear span of $\{K_w(\cdot)\xi : w \in \mathcal D, \xi \in
\mathbb C^n\} \subseteq {\rm Hol}(\mathcal{D}, \mathbb{C}^n)$ by the
rule
$$
\langle K_w(\cdot) \xi,K_u(\cdot) \eta\rangle = \langle K_w(u)
\xi,\eta\rangle,\: \xi,\eta \in \mathbb C^n.
$$
(On the right hand side $\langle , \rangle$ denotes the inner
product of $\mathbb C^n$.  We denote by $\varepsilon_1, \ldots,
\varepsilon_n$ the natural basis of $\mathbb C^n$.)  The completion
of this subspace is then a Hilbert space $\mathcal H$ of holomorphic
functions on $\mathcal{D}$ (cf. \cite{A}) in which
the set of vectors $\{K_w: w \in \mathcal D\}$ is dense.
The kernel $K$ has the reproducing property, that is,
$$
\langle f,K_w \xi\rangle = \langle f(w),\xi\rangle,\: f \in \mathcal
H,\,w\in \mathcal D,\, \xi \in \mathbb C^m.
$$
Now, for $1 \leq i \leq m$, we have
$$M^*_i K_w \xi  = \bar{w}_i K_w \xi,\, w \in \mathcal D,\,\mbox{where}
\, \big (M_i f\big )(z)  =  z_i  f(z),\, f\in \mathcal H$$
and $\{K_w \varepsilon_i\}_{i=1}^n$ is a basis for $\cap_{i=1}^m
\ker (M_i - w_i)^*$, $w\in \mathcal D$.

The joint kernel of the commuting $m$ - tuple $\mathbf M^* = (M_1^*,
\ldots , M_m^*)$, which we assume to be bounded, then has dimension
$n$.   The map $\sigma_i: w \mapsto K_{\bar{w}} \varepsilon_i$, $w
\in \bar{\mathcal D}$, $1\leq i \leq n$, provides a trivialization
of the corresponding bundle $E$ of Cowen - Douglas (cf.
\cite{C-D}).
Here $\bar{\mathcal D}:=\{z \in \mathbb C^m \mid \bar{z} \in
\mathcal D \}$).

On the other hand, suppose we start with an abstract Hilbert space
$\mathcal H$ and a $m$-tuple  of commuting operators $\mathbf T=
(T_1, \ldots , T_m)$  in the Cowen - Douglas class\index{Cowen-Douglas class} $\mathrm
B_n(\mathcal D)$.  Then  we have a holomorphic Hermitian vector
bundle $E$ over $\mathcal D$ with the fibre $E_w = \cap_{i=1}^n \ker
(T_i - w_i)$ at $w\in \mathcal D$. Following
\cite{C-D}, one
associates to this a reproducing kernel Hilbert space $\hat{\mathcal
H}$ consisting of holomorphic functions on $\bar{\mathcal D}$ as
follows.  Take a holomorphic trivialization $\sigma_i: \mathcal D
\to \mathcal H$ with $\sigma_i(w),\,1\leq i \leq n$, spanning $E_w$.
For $f\in \mathcal H$, define $\hat{f}_j(w) := \langle
f,\sigma_j(\bar{w})\rangle_\mathcal H$, $w\in \bar{\mathcal D}$. Set
$\langle \hat{f},\hat{g}\rangle_{\hat{\mathcal H}} := \langle
f,g\rangle_\mathcal H$. The function $K_w \varepsilon_j:=
\widehat{\sigma_j(\bar{w})}$ then serves as the reproducing kernel
for the Hilbert space $\hat{\mathcal H}$.  Note that
\begin{eqnarray*}
\langle K_w(z)\varepsilon_j,\varepsilon_i\rangle_{\mathbb C^n} &=&
\langle K_w\varepsilon_j,K_z\varepsilon_i\rangle_{\hat{\mathcal H}}\\
& = & \langle \widehat{\sigma_j(\bar{w})},\widehat{\sigma_i(\bar{z})}
\rangle_{\hat{\mathcal H}}\\
&= & \langle \sigma_j(\bar{w}),\sigma_i(\bar{z})\rangle_{\mathcal
H},\,z,w \in \bar{\mathcal D}.
\end{eqnarray*}

If one applies this construction to the case where $\mathcal H$ is a
Hilbert space of holomorphic functions on $\mathcal D$, possesses a
reproducing kernel, say $K$, and the operator $\mathbf M^*$ is in
$\mathrm B_n(\bar{\mathcal D})$ then using the trivialization
$\sigma_i(w) = K_{\bar{w}} \varepsilon_i$, $w\in \bar{\mathcal D}$
for the bundle $E$ defined on $\bar{\mathcal D}$, the reproducing
kernel for $\hat{\mathcal H}$ is
\begin{eqnarray*}
\langle K_w(z)\varepsilon_j,\varepsilon_i\rangle_{\mathbb C^n} &=&
\langle K_w \varepsilon_j,K_z \varepsilon_j\rangle_{\mathcal H}\\
&= & \langle \sigma_j(\bar{w}),\sigma_i(\bar{z})\rangle_{\mathcal H}\\
&= & \langle K_w\varepsilon_j,K_z\varepsilon_i\rangle_{\hat{\mathcal H}},
\,z,w \in \mathcal D.
\end{eqnarray*}
Thus $\mathcal H = \hat{\mathcal{H}}$.

Let $G$ be a Lie group
%locally compact second countable (lcsc) topological group
acting transitively on the domain $\mathcal{D}\subseteq
\mathbb{C}^d$. Let ${\mathrm GL}(n,\mathbb{C})$ denote the set of
non-singular $n\times n$ matrices over the complex field
$\mathbb{C}$. We start with a multiplier $J$, that is, a smooth
family of holomorphic maps $J_g:\mathcal{D} \to \mathbb{C}^{n\times n}$
satisfying the cocycle relation
\begin{equation} \label{cocycle}
J_{gh}(z)= J_h(z)J_g(h\cdot z),~~\mbox{for all~~} g,h\in
G,~z\in\mathcal{D},
\end{equation}
Let ${\rm Hol}(\mathcal{D}, \mathbb{C}^n)$ be the linear space
consisting of all holomorphic functions on $\mathcal{D}$ taking values
in $\mathbb{C}^n$. We then obtain a natural (left) action $U$ of the
group $G$ on ${\rm Hol}(\mathcal{D}, \mathbb{C}^n)$:
\begin{equation}
  \label{eq:grp action}
  (U_{g}f)(z) = J_{g^{-1}}(z)f(g^{-1} \cdot z),~f\in
  {\rm Hol}(\mathcal{D},\mathbb{C}^n),~z\in \mathcal{D}.
\end{equation}

Let $\mathbb{K}\subseteq G$ be the compact subgroup which is the
stabilizer of $0$.  For $h,k$ in $\mathbb{K}$, we have
$J_{kh}(0)=J_h (0)J_k (0)$ so that $k\mapsto J_k (0)^{-1}$ is a
representation of $\mathbb{K}$ on $\mathbb{C}^n$.

As in \cite{KM},
 we say that if a reproducing kernel $K$ transforms according to the rule
\begin{equation} \label{eq:trans rule}
J(g,z)K(g(z), g(\omega)) J(g,\omega)^* = K(z,\omega)
\end{equation}
for all $g \in \tilde{G}$; $z,\omega \in \mathbb D$,  then $K$ is
{\em quasi-invariant}.

\begin{Proposition}[\cite{KM},
Proposition 2.1] \label{pkm1}
Suppose ${\mathcal H}$ has a reproducing kernel $K$.  Then $U$ defined
by \eqref{eq:grp action} is a unitary representation if and only if
$K$ is quasi-invariant.
\end{Proposition}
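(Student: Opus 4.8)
\emph{Plan of proof.} The plan is to funnel both implications through a single transformation law for the kernel vectors $K_w\xi$. Observe first that, whatever the kernel $K$ is, the cocycle relation \eqref{cocycle} already makes $U$ a homomorphism of $G$ into the invertible linear transformations of ${\rm Hol}(\mathcal D,\mathbb C^n)$: putting $g=h=e$ in \eqref{cocycle} forces $J_e\equiv I$, and then \eqref{cocycle} itself gives $J_{(gh)^{-1}}(z)=J_{h^{-1}g^{-1}}(z)=J_{g^{-1}}(z)J_{h^{-1}}(g^{-1}\cdot z)$, which unwinds to $U_gU_h=U_{gh}$. So the real content is that each $U_g$ restricts to a \emph{unitary} operator on $\mathcal H$ precisely when $K$ satisfies \eqref{eq:trans rule}, and the hinge of both directions is the identity
\[
U_g(K_w\xi)=K_{g\cdot w}\,J_g(w)^*\xi,\qquad w\in\mathcal D,\ \xi\in\mathbb C^n.\tag{$\ast$}
\]

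For the ``if'' part I would first derive $(\ast)$ by a direct computation: $(U_g(K_w\xi))(z)=J_{g^{-1}}(z)K(g^{-1}\cdot z,w)\xi$, while \eqref{eq:trans rule} applied with the points $g^{-1}\cdot z$ and $w$ rewrites $K(g^{-1}\cdot z,w)=J_g(g^{-1}\cdot z)K(z,g\cdot w)J_g(w)^*$, and \eqref{cocycle} gives $J_{g^{-1}}(z)J_g(g^{-1}\cdot z)=J_e(z)=I$; combining the two leaves $(U_g(K_w\xi))(z)=K(z,g\cdot w)J_g(w)^*\xi$, which is $(\ast)$. Hence $U_g$ carries the dense span $\mathcal H_0$ of $\{K_w\xi\}$ into, and (running the same argument for $g^{-1}$ and using $U_gU_{g^{-1}}=\mathrm{id}$) onto, itself. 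A second application of \eqref{eq:trans rule}, together with the elementary identity $\langle K_w\xi,K_u\eta\rangle=\langle K(u,w)\xi,\eta\rangle$ (which rests on the Hermitian symmetry $K(w,u)^*=K(u,w)$), then gives $\langle U_gK_w\xi,U_gK_u\eta\rangle=\langle K(u,w)\xi,\eta\rangle=\langle K_w\xi,K_u\eta\rangle$, so $U_g|_{\mathcal H_0}$ preserves inner products and extends to a unitary $V_g$ on $\mathcal H$. To close the gap I would identify $V_g$ with $U_g$: given $f\in\mathcal H$, choose $f_k\in\mathcal H_0$ with $f_k\to f$ in $\mathcal H$; boundedness of point evaluations forces $f_k\to f$ pointwise, hence $(V_gf)(w)=\lim_k(U_gf_k)(w)=J_{g^{-1}}(w)f(g^{-1}\cdot w)=(U_gf)(w)$, so $U_g$ leaves $\mathcal H$ invariant and is unitary there. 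Strong continuity of $g\mapsto U_g$ then follows softly from smoothness of the multiplier (giving continuity on the dense set $\mathcal H_0$) and the uniform bound $\|U_g\|=1$.

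For the ``only if'' part, assume each $U_g$ is unitary on $\mathcal H$. Now the reproducing property applies to $U_gf$ itself, so for $f\in\mathcal H$ one gets $\langle U_gf,K_w\xi\rangle=\langle (U_gf)(w),\xi\rangle=\langle f,K_{g^{-1}\cdot w}J_{g^{-1}}(w)^*\xi\rangle$, that is $U_g^*(K_w\xi)=K_{g^{-1}\cdot w}J_{g^{-1}}(w)^*\xi$; replacing $g$ by $g^{-1}$ and using $U_{g^{-1}}=U_g^{-1}=U_g^*$ recovers $(\ast)$ once more. Unitarity of $U_g$ then gives $\langle K_w\xi,K_u\eta\rangle=\langle U_gK_w\xi,U_gK_u\eta\rangle=\langle J_g(u)K(g\cdot u,g\cdot w)J_g(w)^*\xi,\eta\rangle$; comparing with $\langle K_w\xi,K_u\eta\rangle=\langle K(u,w)\xi,\eta\rangle$ and letting $\xi,\eta$, then $u,w$, range freely yields $J_g(u)K(g\cdot u,g\cdot w)J_g(w)^*=K(u,w)$, i.e. \eqref{eq:trans rule}.

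The only genuinely delicate point — everything else being the one-line cocycle manipulation behind $(\ast)$ and the Hermitian symmetry of $K$ — is the bookkeeping about domains: in the ``if'' direction $U_g$ is a priori defined only on all of ${\rm Hol}(\mathcal D,\mathbb C^n)$, and one must confirm that it really preserves $\mathcal H$ before calling it unitary, which is exactly what the density-plus-pointwise-convergence step accomplishes; dually, in the ``only if'' direction the adjoint computation is legitimate only because $U_g$ is already known to be bounded.
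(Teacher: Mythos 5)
Your argument is correct, and it is essentially the standard proof of this statement (the paper itself gives no proof, quoting the result from \cite{KM}, where the argument runs exactly through the identity $U_g(K_w\xi)=K_{g\cdot w}J_g(w)^*\xi$ and the induced inner products of kernel vectors). Your extra care about $U_g$ preserving $\mathcal H$ in the ``if'' direction, and about boundedness before taking adjoints in the ``only if'' direction, is exactly the right bookkeeping.
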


Let $g_z$ be an element of $G$ which maps $0$ to $z$, that is $g_z
\cdot 0 = z$.

%We could then try to define possible kernel functions
%$K:\mathcal{D}\times \mathcal{D}\to \bb{C}^{n\times n}$ satisfying the
%transformation rule (\ref{eq:trans rule}) via the requirement
%\begin{equation}
 % \label{eq:homker def}
  %K(g_z\cdot 0, g_z\cdot 0) = (J_{g_z}(0))^{-1}K(0,0)(J_{g_z}(0)^*)^{-1},
%\end{equation}
%choosing any positive \op $K(0,0)$ on $\bb{C}^n$ which commutes with $J_k(0)$
%for all $k\in \bb{K}$. Then the equation (\ref{eq:homker def})
%determines the function $K$ unambiguously as long as $J_k(0)$ is
%unitary for $k\in \bb{K}$.  Pick $g_z^\prime\in G$ such that $g_z^\prime\cdot 0=z$
For quasi-invariant $K$ we have
\begin{equation}  \label{eq:homker def}
  K(g_z\cdot 0, g_z\cdot 0) = (J_{g_z}(0))^{-1}K(0,0)(J_{g_z}(0)^*)^{-1},
\end{equation}
which shows that $K(z,z)$ is uniquely determined by $K(0,0)$.  For
each $z$ in $\mathcal D$, the positive definite matrix $K(z,z)$
gives the Hermitian structure of our vector bundle.

Given any positive definite matrix $K(0,0)$ such that
\begin{equation} \label{J_kCommutation}
J_k(0)^{-1}K(0,0) = K(0,0)J_k(0)^*\: \mbox{for all}\: k \in \mathbb
K,
\end{equation}
that is, the inner product $\langle K(0,0)\cdot \mid \cdot \rangle$
is invariant under $J_k(0)$, \eqref{eq:homker def} defines a
Hermitian structure on the homogeneous vector bundle determined by
$J_g(z)$. In fact, $K(z,z)$, for any $z\in \mathcal D$ is well
defined, because if    $g_z^\prime$ is another element of $G$ such
that $g_z^\prime\cdot 0=z$ then $g_z^\prime =g_z k$ for some $k\in
\mathbb{K}$. Hence
\begin{eqnarray*}
K(g_z^\prime\cdot 0, g_z^\prime\cdot 0) &=&  K(g_z k\cdot 0, g_z k\cdot 0) \\ &=&(J_{g_zk} (0))^{-1}K(0,0)(J_{g_zk} (0)^*)^{-1}\\
  &=& \big ( J_k(0)J_{g_z}(k\cdot 0) \big )^{-1}K(0,0)\big (
  J_{g_z}(k\cdot 0)^*  J_k(0)^*\big )^{-1} \\
  &=& (J_{g_z}(0))^{-1} (J_k(0))^{-1} K(0,0)
  (J_k(0)^*)^{-1}(J_{g_z}(0)^*)^{-1}\\
  &=& (J_{g_z}(0))^{-1} K(0,0)(J_{g_z}(0)^*)^{-1}\\
  &=& K(g_z\cdot 0, g_z\cdot 0)
\end{eqnarray*}

This gives a good overview of all the Hermitian structures of a
homogeneous holomorphic vector bundle.  But not all such bundles
arise from a reproducing kernel.  Starting with a positive matrix
satisfying \eqref{J_kCommutation},  \eqref{eq:homker def} gives us
$K(z,z)$, but there is no guarantee (and is false in general) that
$K(z,z)$ extends to a positive definite kernel on $\mathcal D \times
\mathcal D$.  It is, however, true that if there is such an
extension then it is uniquely determined by $K(z,z)$ (because
$K(z,w)$ is holomorphic in $z$ and antiholomorphic in $w$).

This leaves us with the following possible strategy for finding the
homogeneous operators in the Cowen - Douglas class.\index{Cowen-Douglas class}  Find all
multipliers, (i.e., holomorphic homogeneous vector bundles (hhvb))
such that there exists $K(0,0)$ satisfying \eqref{J_kCommutation}
and consider all such $K(0,0)$. Then determine which of the $K(z,z)$
obtained form \eqref{eq:homker def}  extends to a positive definite
kernel on $\mathcal D \times \mathcal D$.  Then check if the
multiplication operator is well-defined and bounded on the
corresponding Hilbert space.

Let $\mathcal{H}$ be a Hilbert space consisting of $\mathbb{C}^n$ -
valued holomorphic functions on some domain $\mathcal D$ possessing
a reproducing kernel $K$. The sections of the corresponding
holomorphic Hermitian vector bundle defined on $\mathcal D$ have
many different realizations. The connection between two of these is
given by a $n\times n$ invertible matrix valued holomorphic function
$\varphi$ on $\mathcal{D}$. For $f\in \mathcal{H}$, consider the map
$\Gamma_\varphi: f\mapsto \tilde{f}$, where $\tilde{f}(z)=
\varphi(z)f(z)$.  Let $\tilde{\mathcal{H}}=\{\tilde{f}: f\in
\mathcal{H}\}$.  The requirement that the map $\Gamma_\varphi$ is
unitary, prescribes a Hilbert space structure for the function space
$\tilde{\mathcal{H}}$.  The reproducing kernel for
$\tilde{\mathcal{H}}$ is easily calculated
\begin{equation} \label{rk for tilde}
\tilde{K}(z,w) = \varphi(z) K(z,w) \varphi(w)^*.
\end{equation}
It is also easy to verify that $\Gamma_\varphi M \Gamma_\varphi^*$
is the multiplication operator~ $M: \tilde{f} \mapsto z \tilde{f}$
on the Hilbert space $\tilde{\mathcal{H}}$.  Suppose we have a
unitary representation $U$ given by a multiplier $J$ acting on
$\mathcal{H}$ according to (\ref{eq:grp
  action}).   Transplanting this action to $\tilde{\mathcal{H}}$ under
the isometry $\Gamma_\varphi$, it becomes
$$
\big (\tilde{U}_{g^{-1}} \tilde{f}\big )(z) =
\tilde{J}_g(z)\tilde{f} (g\cdot z),
$$
where the new multiplier $\tilde{J}$ is given in terms of the
original multiplier $J$ by
$$
\tilde{J}_g(z) = \varphi(z)J_g(z) \varphi(g\cdot z)^{-1}.
$$
Of course, now $\tilde{K}$ transforms according to (\ref{eq:trans
rule}), with the aid of $\tilde{J}$. If we want, we can now ensure
that, by passing from $\mathcal{H}$ to an appropriate
$\tilde{\mathcal{H}}$, $\tilde{K}(z,0) \equiv 1$. We merely have to
set $\varphi(z) = K(0,0)^{1/2}K(z,0)^{-1}$. Thus the reproducing
kernel $\tilde{K}$ is almost unique.  The only freedom left is to
multiply $\varphi(z)$ by a constant unitary $n\times n$ matrix. Once
the kernel is normalized, we have
$$
 J_k(z) = J_k(0), ~~z\in \mathbb{D},~k\in \mathbb{K}.
$$
In fact,
$$
I = K(z,0)=J_k(z)K(k\cdot z, 0)J_k(0)^* = J_k(z)J_k(0)^{-1}
$$
and the statement follows. Therefore, once the kernel $K$ is
normalized, we have
$$
\big ( U_{k^{-1}} f \big ) (z) = J_k(0)f(k\cdot z),~~k\in
\mathbb{K}.
$$

%But it is not clear that the action is unitarizable with respect to
%some innerproduct.

Given a multiplier $J$, there is always the following method for
constructing a Hilbert space with a quasi-invariant Kernel $K$
transforming according to \eqref{eq:homker def}.  We look for a
functional Hilbert space possessing this property among the weighted
$L^2$ spaces of holomorphic functions on $\mathcal{D}$.  The norm on
such a space is
\begin{equation} \label{inv norm}
 \|f\|^2 = \int_\mathcal{D}f(z)^* Q(z)f(z) dV(z)
\end{equation}
with some positive matrix valued function $Q(z)$. Clearly, this
Hilbert space possesses a reproducing kernel $K$.  The condition
that $U_{g^{-1}}$ in (\ref{eq:grp action}) is unitary is
\begin{eqnarray*}
\lefteqn{\!\!\!\!\!\!\!\!\!\!\!\!\!\!\!\!\!\!\!\!\!\!\int_\mathcal{D}f(g\cdot z)^* J_g^*(z)Q(z)J_g(z)f(g\cdot z) dV(z)= \int_\mathcal{D}f(w)^* Q(w)f(w)dV(w)}\\
%& \qquad\qquad
&\qquad\qquad =&  \int_\mathcal{D}f(g\cdot z)^* Q(g\cdot z) f(g\cdot z)
\left |\frac{\partial(g\cdot z)}{\partial(z)} \right |^2 dV(z),
\end{eqnarray*}
that is,
\begin{equation} \label{inv quadratic}
Q(g\cdot z)= J_g(z)^*Q(z)J_g(z) \left |\frac{\partial(g\cdot
z)}{\partial(z)} \right |^{-2},
\end{equation}
which is equation (\ref{eq:trans rule}) with $J_g(z)$ replaced by
$\frac{\partial(g\cdot z)}{\partial(z)}J_g(z)^{*-1}$.

Given the multiplier $J_g(z)$, $Q(z)$  is again determined by
$Q=Q(0)$, and (just as in the case of $K(0,0)=A$) it must be a
positive matrix commuting with all $J_k(0)$, $k\in \mathbb{K}$. (It
is assumed that each $J_k(0)$ is unitary).

In this way, we can construct many examples of homogeneous operators
in $\mathrm B_n(\mathcal D)$ but not all.

Even, not all the the homogeneous operators in $\mathrm B_1(\mathbb
D)$ come from this construction.  There is a homogeneous operator in
the class $\mathrm B_1(\mathbb D)$ corresponding to the multiplier
$J(g,z) = (g^\prime(z))^{ \lambda}$, $\lambda \in \mathbb R$ exactly
when $\lambda >0$.  The reproducing kernel is $K(z,w) = (1-z
\bar{w})^{-2 \lambda}$. But such an operator arises from the
construction outlined above only if $\lambda \geq 1/2$.

Never the less, the homogeneous operators constructed in the manner
described above are of interest since they happen to be exactly the
subnormal homogeneous operators in this class (cf.
\cite{matball}).

\section{Computation of the multipliers for the unit disc}

In the case of ${\mathrm B}_n(\mathbb D)$, it is shown in
\cite{KM} 
that the bundle corresponding to a homogeneous Cowen-Douglas
operator admits an action of  the covering group $\tilde{G}$ of the
group $G=\mbox{M\"{o}b}$ via unitary bundle maps.  This suggests the
strategy of first finding all the homogeneous holomorphic Hermitian
vector bundles (a problem easily solved by known methods) and then
determining which of these correspond to an operator in the
Cowen-Douglas\index{Cowen-Douglas class} class.

We are going to use the method of holomorphic induction.  For this,
first we describe some basic facts and fix our notation.  We follow
the notation of \cite{Sig} which we will
use as a reference.

The Lie algebra $\mathfrak g$ of $\tilde{G}$ is spanned by $X_1=
\frac{1}{2} \left(
              \begin{array}{cc}
                0 & 1 \\
                1 & 0 \\
              \end{array}
            \right)$, $X_0 = \frac{1}{2} \left(
                                           \begin{array}{cc}
                                             i & 0 \\
                                             0 & -i \\
                                           \end{array}
                                         \right)
            $ and $Y= \frac{1}{2}
\left(
  \begin{array}{cc}
    0 & -i \\
    i & 0 \\
  \end{array}
\right)$.  The subalgebra $\mathfrak k$ corresponding to
$\tilde{\mathbb K}$ is spanned by $X_0$. In the complexified Lie
algebra $\mathfrak g^{\mathbb C}$, we  mostly use the complex basis
$h,x,y$ given by
\begin{eqnarray*}
h &=& - i X_0 = \frac{1}{2} \left(
                              \begin{array}{cc}
                                1 & 0 \\
                                0 & -1 \\
                              \end{array}
                            \right)
\\
x &=& X_1+i Y = \left(
                  \begin{array}{cc}
                    0 & 1 \\
                    0 & 0 \\
                  \end{array}
                \right)
\\
y &=& X_1 - i Y = \left(
                    \begin{array}{cc}
                      0 & 0 \\
                      1 & 0 \\
                    \end{array}
                  \right)
\end{eqnarray*}

We write $G^{\mathbb C}$ for the  (simply connected group) ${\mathrm
SL}(2,\mathbb C)$. Let $G_0={\mathrm SU}(1,1)$ be the subgroup
corresponding to $\mathfrak g$. The group $G^{\mathbb C}$ has the
closed subgroups $\mathbb K^\mathbb C = \Big \{ \Big (\begin{matrix}
z & 0\\ 0 & \tfrac{1}{z}
\end{matrix} \Big ):z \in \mathbb{C}, z\not = 0 \Big \}$, $P^+=\Big \{ \Big (\begin{matrix} 1 & z\\ 0 & 1 \end{matrix} \Big ):z\in \mathbb{C} \Big \}$,
$P^- = \Big \{ \Big (\begin{matrix} 1 & 0\\ z & 1 \end{matrix} \Big
): z\in \mathbb{C} \Big \}$; the corresponding Lie algebras
$\mathfrak k^\mathbb C = \Big \{ \Big (\begin{matrix} c & 0\\ 0 & -c
\end{matrix} \Big ): c\in \mathbb{C} \Big \}$, $\mathfrak p^+ = \Big \{ \Big (\begin{matrix} 0 & c\\ 0 & 0 \end{matrix} \Big ):c\in \mathbb{C} \Big \}$,
$\mathfrak p^- = \Big \{ \Big (\begin{matrix} 0 & 0\\ c & 0
\end{matrix} \Big ):c\in \mathbb{C} \Big \}$ are spanned by $h$, $x$ and $y$, respectively.
The product $\mathbb K^\mathbb C P^- = \Big \{ \left(
                             \begin{array}{cc}
                               a & 0 \\
                               b & \frac{1}{a} \\
                             \end{array}
                           \right)
 : 0\not = a\in \mathbb{C}, b \in \mathbb{C} \Big \}$ is
a closed subgroup to be denoted $T$; its Lie algebra is $\mathfrak t
= \mathbb C h + \mathbb C y$.  The product set $P^+\mathbb K^\mathbb
CP^-=P^+T$
%\subseteq G_0^\mathbb C$
is dense open in $G^\mathbb C$, contains $G$, and the product
decomposition of each of its elements is unique. ($G^{\mathbb C}/T$
is the Riemann sphere, $g \tilde{\mathbb K} \to g T,\:(g\in G)$ is
the natural embedding of $\mathbb D$ into it.)

According to holomorphic induction \cite[Chap 13]{Kir} 
the isomorphism classes of homogeneous holomorphic  vector bundles are
in one to one correspondence with equivalence classes of linear
representations $\varrho$ of the pair $(\mathfrak t, \tilde{\mathbb
K})$.  Since $\tilde{\mathbb K}$ is connected, here this means just
the representations of $\mathfrak t$.  Such a representation is
completely determined by the two linear transformations $\varrho(h)$
and $\varrho(y)$ which satisfy the bracket relation of $h$ and $y$,
that is,
\begin{equation} \label{triangalgcommrel}
[\varrho(h),\varrho(y)] = - \varrho(y).
\end{equation}
The $\tilde{G}$-invariant Hermitian structures on the homogeneous
holomorphic vector bundle (making it into a homogeneous holomorphic
Hermitian vector bundle), if they exist, are given by
$\varrho(\tilde{\mathbb K})$-invariant inner products on the
representation space.  An inner product is $\varrho(\tilde{\mathbb
K})$-invariant if and only if $\varrho(h)$ is diagonal with real
diagonal elements in an appropriate basis.

We will be interested only in bundles with a Hermitian structure.
So, we will assume without restricting generality, that the
representation space of $\varrho$ is $\mathbb C^d$ and that
$\varrho(h)$ is a real diagonal matrix.

Furthermore, we will be interested only in irreducible homogeneous
holomorphic Hermitian vector bundles, this corresponds to $\varrho$
not being the orthogonal direct sum of non-trivial representations.
Suppose we have such a $\varrho$; we write $V_\alpha$ for the
eigenspace of $\varrho(h)$ with eigenvalue $\alpha$.  Let $-\eta$ be
the largest eigenvalue of $\varrho(h)$ and $m$ be the largest
integer such that $-\eta, -(\eta+1), \ldots , -(\eta + m)$ are all
eigenvalues.  From \eqref{triangalgcommrel} we have $\varrho(y)
V_\alpha \subseteq V_{\alpha -1}$; this and orthogonality of the
eigenspaces imply that $V=\oplus_{j=0}^mV_{-(\eta +j)}$ and its
orthocomplement are invariant under $\varrho$.  So, $V$ is the whole
space, and have proved that the eigenvalues of $\varrho(h)$ are
$-\eta, \ldots , -(\eta+m)$.

>From this it is clear that $\varrho$ can be written as  the tensor
product of the one dimensional representation $\sigma$ given by
$\sigma(h) = -\eta$, $\sigma(y)=0$, and the representation
$\varrho^0$ given by $\varrho^0(h)= \varrho(h) + \eta I$,
$\varrho^0(y) = \varrho(y)$. Correspondingly, the bundle for
$\varrho$ is the tensor product of a line bundle $L_\eta$ and the
bundle corresponding to $\varrho^0$.

The representation $\varrho^0$ has the great advantage that it
lifts to a holomorphic representation of the group $T$.  It follows
that the homogeneous holomorphic vector bundle it determines for
$\mathbb D, \tilde{G}$, can be obtained as the restriction to
$\mathbb D$ of the homogeneous holomorphic vector bundle over
$G^\mathbb C/T$ obtained by ordinary induction in the complex
analytic category.  So, (as a convenient choice) take the local
holomorphic cross section $z\mapsto s(z):= \Big (\begin{matrix}
1&z\\ 0&1 \end{matrix}\Big )$ of $G^\mathbb C/T$ over $\mathbb D$.
In the trivialization given by $s(z)$, the multiplier then appears
for $g=\Big (\begin{matrix} a & b\\ c & d \end{matrix}\Big ) \in
G^\mathbb C$ as
\begin{eqnarray} \label{explicitformulamult}
J_g^0(z) &=& \varrho^0\big ( s(z)^{-1} g^{-1} s(g \cdot z) \big ) \nonumber \\
&=& \varrho^0\begin{pmatrix} c z + d & 0\\ - c & (c z + d )^{-1} \end{pmatrix}   \nonumber\\
&=& \varrho^0\Big (\exp \big (\frac{-c}{ c z + d}y  \big ) \Big )
\varrho^0 \big ( \exp(2 \log(c z + d) h ) \big ).
\end{eqnarray}
The last two equalities are simple computations.

For the line bundle $L_\eta$, the multiplier is $g^\prime(z)^\eta$
(we write $g^\prime(z) = \frac{\partial g}{\partial z}(z)$).
Consequently, the multiplier corresponding to the original $\varrho$
is
\begin{equation} \label{genmult}
 J_g(z) = \big (g^\prime(z)\big )^\eta J_g^0(z).
\end{equation}

\section{Conditions imposed by the reproducing kernel}
We now assume that we have a homogeneous holomorphic vector bundle
induced by $\varrho$ as in the preceding sections and that it has a
reproducing kernel. Then we derive conditions about the action of
$\tilde{G}$ that follow from this hypothesis.  In the final section,
we will show that these conditions are sufficient: they lead
directly to the construction of all homogeneous operators the
Cowen-Douglas class\index{Cowen-Douglas class} with
multiplicity free\index{multiplicity free} representations.

Under our hypothesis there is a Hilbert space structure on our
sections in which the action of $\tilde{G}$ given by
\eqref{eq:homker def} is unitary.  We will study this representation
through its $\mathbb K$ - types (i.e., its restriction to
$\tilde{\mathbb K}$).  We first compute the infinitesimal
representation.

For $X \in \mathfrak g$, and holomorphic $f$, we have
\begin{eqnarray} \label{Udirect}
\lefteqn{ (U_Xf)(z) := \big (\tfrac{d}{dt} \big )_{|t=0}  \big( U_{\exp(tX)}
f\big )(z)} \nonumber \\
&=& \big (\tfrac{d}{dt} \big )_{|t=0} \Big \{ \Big ( \frac{\partial
\big ( \exp (-tX) \cdot z \big )}{\partial z}\Big )^\eta J^0_{\exp
(-tX)}(z) f(\exp(-tX)\cdot z) \Big \}.
\end{eqnarray}

There is a local action of $G^\mathbb C$, so this formula remains
meaningful also for $X\in \mathfrak g^\mathbb C$. There are three
factors to differentiate. For the last one, $\big (\tfrac{d}{dt}
\big )_{|t=0} f(\exp(-tX)\cdot z) = - (X z) f^\prime(z)$, and we see
that $\exp(t x) \cdot z =  \left(
                              \begin{array}{cc}
                                1 & t \\
                                0 & 1 \\
                              \end{array}
                            \right)
 \cdot z = z+t$ gives $x
\cdot z = 1$; by similar computations, $y \cdot z= -z^2$, $h\cdot z=
z$.  For the first factor, we interchange the differentiations and
get $- \eta \frac{\partial}{\partial z} (X\cdot z)$, i.e., $0, 2\eta
z, -\eta$,   respectively for $x,y$ and $h$.

%Let $E= U_x$, $F=-U_y$ and $2H = U_h$. To exhibit an
%orthonormal basis, it will be necessary to compute the operators $E$,
%$F$ and $H$ explicitly.

To differentiate the factor in the middle, we use its expression
\eqref{explicitformulamult}.  First for $X=y$, we have

\begin{align}
%\left . \frac{d}{dt} \right |_{t=0}\varrho \mat{1}{0}{-t(tz + 1)^{-1}}{1}
\left . \frac{d}{dt} \right |_{t=0}\varrho^0\big ( \exp (-t
(tz+1)^{-1}y) \big ) & = \left . \frac{d}{dt} \right |_{t=0}\big (
\exp (-t (tz+1)^{-1}
\varrho^0(y) \big )\nonumber \\
& = - \varrho^0(y)
\end{align}
and
\begin{align}
%\left . \frac{d}{dt} \right |_{t=0}\varrho \mat{tz +1}{0}{0}{(tz + 1)^{-1}}
\left . \frac{d}{dt} \right |_{t=0}\varrho^0(\exp (2 \log(tz +1) h))
& = \left . \frac{d}{dt} \right |_{t=0}\exp (2 \log(tz +1) \varrho^0(h))\nonumber\\
& = 2 z \varrho^0(h)
\end{align}
>From these, following the conventions of \cite{Sig} in defining H,E,F, it follows
that
\begin{align} \label{-Ff}
(F f)(z) :=  ( U_{-y} f ) (z) & = \left .\frac{d}{dt}\right |_{|t=0}
J_{\exp (t y)}(z) f(\exp(ty)
\cdot z) \nonumber\\
& = \big ( -2 \eta z I + 2 z \varrho^0(h) - \varrho^0(y) \big )f(z)
-z^2 f^\prime(z).
\end{align}
%where  $\varrho(h) = D^{m-2\eta}_{-m -2\eta}$ and
%$\varrho(y) = S_m$.
Similar, simpler computations give, for $g = \exp (t x) =
\left(
  \begin{array}{cc}
    1 & t \\
    0 & 1 \\
  \end{array}
\right)$
\begin{equation}
(Ef)(z) := \big (U_x f\big )(z) = - f^\prime(z).
\end{equation}
Finally, for $g= \exp (th) = \left(
                               \begin{array}{cc}
                                 e^{t/2} & 0 \\
                                 0 & e^{-t/2} \\
                               \end{array}
                             \right)$, we have
$$ J_{\exp (t h)}(z) = \varrho \left(
                                 \begin{array}{cc}
                                   e^{-t/2} & 0 \\
                                   0 & e^{t/2} \\
                                 \end{array}
                               \right)
= \exp(-t) \varrho^0(h).
$$
Hence it is not hard to verify that
\begin{equation}\label{missing}
(H f)(z):= \big ( U_hf \big ) (z) =  \big ( - \eta I + \varrho^0(h)
\big ) f(z) - z f^\prime(z).
\end{equation}
Under our hypothesis, we have a reproducing kernel and $U$ is
unitary. From our computations above, we can determine how $U$
decomposes into irreducibles. The infinitesimal representation of
$U$ acts on the vector valued polynomials; a good basis for this
space is $\{\varepsilon_j z^n: n \geq 0\}$; $\varepsilon_j$ is the
$j$th natural basis vector in $\mathbb C^d$.  We have
$H(\varepsilon_j z^n) = - (\eta + j + n)(\varepsilon_j z^n)$, so the
lowest $\mathbb K$ - types of the irreducible summands are spanned
by the $\varepsilon_j$.  This space is also the kernel of $E$.  So,
$U$ is direct sum of discrete series representations ($U^{\eta +
j}$, in the notation of \cite{Sig}), each one
appearing as many
times as $-(\eta + j)$ appears on the diagonal of $\varrho(h)$.

\section{The multiplicity-free case}

In order to be able to use the computations of
\cite{KM} without
confusion, we introduce the parameter $\lambda = \eta +\frac{m}{2}$.

From the last remark of the preceding section, it is clear that if
$U$ is multiplicity-free\index{multiplicity free} then
$\varrho(h)$ is an $(m+1)\times (m+1)$
matrix with eigenvalues $-\lambda+\frac{m}{2}, -\lambda +
\frac{m}{2}-1, \ldots, -\lambda-\frac{m}{2}$.  As $\varrho(h)
\varepsilon_j = -(\lambda-\frac{m}{2} + j) \,\varepsilon_j$,
\eqref{triangalgcommrel} shows that
$$ \varrho(h) \big ( \varrho(y) \varepsilon_j\big ) = - (\lambda+\frac{m}{2}+j+1) \,\varrho(y)\, \varepsilon_j, \mbox{\rm  that is}, \varrho(y)\,\varepsilon_j = \mbox{\rm const}\, \varepsilon_{j+1}.$$
So, $\varrho(y)$ is a lower triangular matrix (with non-zero
entries, otherwise we have a reducible bundle).  The homogeneous
holomorphic vector bundle determines $\varrho(y)$ only up to a
conjugacy by a matrix commuting with $\varrho(h)$, that is, a
diagonal matrix.  So, we can choose the  realization of our bundle
by applying an appropriate conjugation such that $\varrho(y)= S_m$,
the triangular matrix whose $(j,j-1)$ element is $j$ for $1   \leq j
\leq m$.

By standard representation theory of $\mathrm{SL}(2, \mathbb R)$,
the vectors $(-F)^n \varepsilon_j$ are orthogonal and the
irreducible subspaces $\mathcal H^{(j)}$ for $U$ are
$\mathrm{span}\{(-F)^n \varepsilon_j: n \geq 0\}$ for $0\leq j \leq
m$. There is also precise information about the norms.

%Furthermore, for $0\leq j \leq m$, note that (cf. \cite[Proposition 6.18]{Sig}) $ \|F^{n+1} \varepsilon_j\| = \sigma^j_n \| F^n \varepsilon_j\|$.

Using this, we can construct an orthonormal basis  for our
representation space.

%\section{The homogeneous shifts in ${\rm B}_{m+1}(\mathbb{D})$}
For any $n \geq 0$, we let $u^j_n(z) = (-F)^n \varepsilon_j$.

To proceed further, we need to find the vectors $u^j_n(z)$
explicitly. This is facilitated by the following Lemma.

\begin{Lemma} \label{-F}
Let $u$ be a vector with $u_\ell(z) = u_\ell z^{n-\ell}$, $0\leq
\ell \leq m$ and $n \geq 0$.  We then have
$$
(-Fu)_\ell(z) = (2\lambda-m+\ell+n)u_\ell z^{n+1-\ell} + \ell
u_{\ell-1}z^{n+1-\ell},\, 0\leq \ell \leq m.
$$
\end{Lemma}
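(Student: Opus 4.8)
The plan is to prove the Lemma by a direct substitution into the explicit formula \eqref{-Ff} for the action of $-F = U_{-y}$, after writing the two linear maps $\varrho^0(h)$ and $\varrho^0(y)$ in their concrete multiplicity-free form. First I would record the relevant data from the previous sections. Since $\eta = \lambda - \tfrac{m}{2}$ and $\varrho^0(h) = \varrho(h) + \eta I$ with $\varrho(h)\varepsilon_\ell = -(\lambda - \tfrac{m}{2} + \ell)\varepsilon_\ell$, we get $\varrho^0(h)\varepsilon_\ell = -\ell\,\varepsilon_\ell$; that is, $\varrho^0(h)$ is the diagonal matrix $\mathrm{diag}(0,-1,\dots,-m)$. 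Moreover $\varrho^0(y) = \varrho(y) = S_m$, whose $(j,j-1)$ entry is $j$, so that for any vector $u = \sum_\ell u_\ell\varepsilon_\ell$ one has $(\varrho^0(y)u)_\ell = \ell\,u_{\ell-1}$ (with the convention $u_{-1} = 0$).

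Next I would rewrite \eqref{-Ff} as $(-Fu)(z) = \big(2\eta z\,I - 2z\,\varrho^0(h) + \varrho^0(y)\big)u(z) + z^2 u'(z)$ and plug in $u_\ell(z) = u_\ell z^{n-\ell}$. Reading off the $\ell$-th component term by term: $2\eta z\,I$ contributes $2\eta\,u_\ell z^{n+1-\ell}$; $-2z\,\varrho^0(h)$ contributes $2\ell\,u_\ell z^{n+1-\ell}$; $\varrho^0(y)$ contributes $\ell\,u_{\ell-1} z^{n+1-\ell}$; and $z^2 u'(z)$ contributes $(n-\ell)\,u_\ell z^{n+1-\ell}$. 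Collecting the three terms proportional to $u_\ell$ produces the coefficient $2\eta + 2\ell + (n-\ell) = 2\eta + \ell + n = 2\lambda - m + \ell + n$, which is exactly the claimed formula.

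I do not expect any real obstacle: the Lemma is a bookkeeping computation once the objects are normalized. The only places that require care are the normalization of the eigenvalues of $\varrho^0(h)$, so that the $\eta$- and $\lambda$-shifts combine to the clean value $-\ell$, and the index convention for $S_m$, namely that $\varrho^0(y)$ lowers the index $\ell\mapsto\ell-1$ with weight $\ell$ (rather than raising it), which is what makes the off-diagonal term come out as $\ell\,u_{\ell-1}z^{n+1-\ell}$.
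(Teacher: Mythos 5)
Your computation is correct and follows essentially the same route as the paper: substitute $u_\ell(z)=u_\ell z^{n-\ell}$ into the negation of \eqref{-Ff}, using $\varrho^0(h)=\mathrm{diag}(0,-1,\dots,-m)$, $\varrho^0(y)=S_m$ (so $(\varrho^0(y)u)_\ell=\ell u_{\ell-1}$), and $2\eta=2\lambda-m$, then collect the coefficient $2\eta+2\ell+(n-\ell)=2\lambda-m+\ell+n$. The paper packages the diagonal part slightly differently (as $2\lambda z - 2zD_m$ with $D_m$ having symmetric diagonal), but the term-by-term bookkeeping is identical.
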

\begin{proof}
We recall (\ref{-Ff}) that $-(Ff)(z) = 2\lambda z f(z) + S_m f(z) -
2 z D_m f(z) + z^2 f^\prime(z)$ for $f \in {\mathcal {H}}(n)$, where
$D_m=-\varrho^0(h)$ is the diagonal operator with diagonal
$\{-\frac{m}{2},-\frac{m}{2}+1, \ldots, \frac{m}{2}\}$
%$\{-m,-m+2, -m+4\ldots,m\}$
and $S_m$ is the forward weighted shift with weights $1,2,\ldots,
m$. Therefore we have
$$
(-Fu)_\ell(z) = \big (2\lambda u_\ell+ \ell u_{\ell-1} -
(m-2\ell)u_\ell + (n-\ell) u_\ell \big ) z^{n+1-\ell}$$ completing
the proof.
\end{proof}

\begin{Lemma} \label{u}
For $0 \leq j \leq m$ and $0 \leq \ell \leq m$, we have
$$
u^j_{n, \ell}(z) =
\begin{cases} 0 & \mbox{\rm if ~} {0 \leq \ell \leq j-1} \\
\binom{n}{k}(j+1)_k(2\lambda -m + 2j + k)_{n-k} z^{n-k}& \mbox{\rm
if ~} {j \leq \ell \leq m,~ k=\ell -j,}
\end{cases}
$$
where $u^j_{n,\ell}(z)$ is the scalar valued function at the
position $\ell$ of the $\mathbb{C}^{m+1}$ - valued function $u^j_n(z):=(-F)^n
\varepsilon_j$.
\end{Lemma}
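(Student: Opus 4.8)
The plan is to prove Lemma~\ref{u} by induction on $n$, pushing the formula for $u^j_n$ through Lemma~\ref{-F}. I first fix the bookkeeping that makes Lemma~\ref{-F} applicable at each stage: the claimed formula says $u^j_{n,\ell}(z)$ is a constant multiple of the single monomial $z^{n-k}=z^{(n+j)-\ell}$ for $j\le\ell\le m$ (with $k=\ell-j$) and is identically $0$ for $\ell<j$, so $u^j_n$ is precisely a vector of the type appearing in Lemma~\ref{-F}, but with the index ``$n$'' there replaced by $n+j$. The base case $n=0$ is immediate, since $u^j_0=\varepsilon_j$, i.e. $u^j_{0,\ell}(z)=\delta_{\ell j}$, which matches the right-hand side of the asserted formula: $\binom{0}{k}$ kills the entries with $\ell>j$, while $\binom{0}{0}(j+1)_0(2\lambda-m+2j)_0=1$ gives the entry $\ell=j$.

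For the inductive step I apply Lemma~\ref{-F} to $u=u^j_n$ with index parameter $n+j$. Writing $c^j_{n,\ell}$ for the coefficient in $u^j_{n,\ell}(z)=c^j_{n,\ell}\,z^{(n+j)-\ell}$, Lemma~\ref{-F} gives the recursion
$$
c^j_{n+1,\ell} \;=\; \big(2\lambda-m+\ell+n+j\big)\,c^j_{n,\ell} \;+\; \ell\, c^j_{n,\ell-1}.
$$
Set $a=2\lambda-m+2j$ and $k=\ell-j$, so $2\lambda-m+\ell+n+j=a+k+n$ and $\ell=j+k$. Substituting the induction hypothesis $c^j_{n,j+k}=\binom{n}{k}(j+1)_k(a+k)_{n-k}$ together with $c^j_{n,j+k-1}=\binom{n}{k-1}(j+1)_{k-1}(a+k-1)_{n-k+1}$, and simplifying via $(j+k)(j+1)_{k-1}=(j+1)_k$, $(a+k-1)_{n-k+1}=(a+k-1)(a+k)_{n-k}$, and $(a+k)_{n+1-k}=(a+n)(a+k)_{n-k}$, the target identity $c^j_{n+1,j+k}=\binom{n+1}{k}(j+1)_k(a+k)_{n+1-k}$ reduces to the purely numerical statement
$$
(a+k+n)\binom{n}{k} + (a+k-1)\binom{n}{k-1} \;=\; (a+n)\binom{n+1}{k}.
$$
Using Pascal's rule $\binom{n+1}{k}=\binom{n}{k}+\binom{n}{k-1}$, this collapses to $k\binom{n}{k}=(n-k+1)\binom{n}{k-1}$, the standard binomial absorption identity, which closes the induction.

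The step I expect to demand the most care is the degree and index bookkeeping rather than the algebra: one must check that Lemma~\ref{-F} is invoked with its index parameter shifted by exactly $j$ at every stage; that the lower-triangular vanishing $u^j_{n,\ell}=0$ for $\ell<j$ is propagated (it is, since $\ell\mapsto\ell-1$ keeps us below $j$, while the boundary case $\ell=j$ only uses $c^j_{n,j-1}=0$, consistent with $\binom{n}{-1}=0$); and that the edge cases $k=0$ and $k>n$ of the Pochhammer symbols and binomial coefficients are compatible with the monomial $z^{n-k}$ having non-negative degree. With those conventions pinned down, the displayed computation is routine.
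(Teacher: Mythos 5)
Your proof is correct and follows essentially the same route as the paper: induction on $n$, feeding the claimed formula through Lemma~\ref{-F} with the index parameter shifted to $n+j$, and closing the coefficient recursion with Pochhammer manipulations. Your reduction of the final step to $(a+k+n)\binom{n}{k}+(a+k-1)\binom{n}{k-1}=(a+n)\binom{n+1}{k}$ via Pascal's rule and the absorption identity is just a cleaner packaging of the chain of equalities the paper writes out.
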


\begin{proof} The proof is by induction on $n$.
%We recall that $u^j_n(z) = (-F)^p \varepsilon_j$, $0 \leq p \leq n$.
%Let $u^j_{n,\ell}(z)$ be the scalar function at the position $\ell$
%of $u^j_n(z)$, $0\leq \ell \leq m$.  
The vectors $u^j_n$ are in
${\mathcal {H}}(n)$ for $0\leq j \leq m$.  For a fixed but arbitrary
positive integer $j$, $0\leq j \leq m$, we see that
$u^j_{n,\ell}(z)$ is $0$ if $n < \ell -j$. We have to verify that
$(-F u^j_n)(z) = u^j_{n+1}(z)$.  From the previous Lemma, we have
$$(-F u^j_n)_\ell(z) =
(2\lambda-m+\ell+n+j) u^j_{n,\ell} z^{n+j+1-\ell} + \ell
u^j_{n,\ell-1}z^{n+j+1-\ell},$$ where $(-F u^j_n)_\ell(z)$ is the
scalar function at the position $\ell$ of the $\mathbb C^{m+1}$ -
valued function $(-F u^j_n)(z)$. To complete the proof, we note
(using $k=\ell-j$) that
\begin{eqnarray*}
\lefteqn{ (-F u^j_n)_{j+k}(z)}\\
&=& \big (\tbinom{n}{k}(j+1)_k(2\lambda -m +2j+k)_{n-k}
(2\lambda-m+2j+k+n)+ \\
&& \tbinom{n}{k -1}(j+1)_k(2\lambda -m +2j+k
-1)_{n-k} \big ) z^{n+1-k}\\
&=& (j+1)_k(2\lambda-m+2j+k)_{n-k}\\
&& \big ( \tbinom{n}{k}(2\lambda -m + 2j + k + n)
+ \tbinom{n}{k-1} (2\lambda -m + 2j +k -1) \big )z^{n+1-k}\\
&=& (j+1)_k(2\lambda-m+2j+k)_{n-k} \\ && \big( (\tbinom{n}{k} +
\tbinom{n}{k-1}(2\lambda-m+2j+k -1)+
(n+1)\tbinom{n}{k}\big )z^{n+1-k}\\
&=& (j+1)_k(2\lambda-m+2j+k)_{n-k} \\&&
\big(\tbinom{n+1}{k}(2\lambda-m+2j+k
-1) +\tbinom{n+1}{k} (n-k+1) \big )z^{n+1-k}\\
&=& (j+1)_k(2\lambda-m+2j+k)_{n-k} \big(\tbinom{n+1}{k}(2\lambda -m
+
2j +n) \big )z^{n+1-k}\\
&=&(j+1)_k \big(\tbinom{n+1}{k}(2\lambda-m+2j+k)_{n+1-k} \big )z^{n+1-k}\\
&=& u^j_{n+1,j+k}(z)
\end{eqnarray*}
for a fixed but arbitrary $j$, $0 \leq j \leq m$ and $k$, $ ~0\leq k
\leq m-j$. This completes the proof.
\end{proof}

On $\mathcal H^{(j)}$, we have the representation $U^{\lambda_j}$
acting $(0 \leq j \leq m)$, where $\lambda_j=\lambda -\frac{m}{2}
+j$.  Its lowest $\mathbb K$ - type is spanned by $\varepsilon_j \,
(= u_0^j)$  and $H\varepsilon_j = \lambda_j \varepsilon_j$.  By
\cite[Prop 6.14]{Sig} we have
$\|(-F)^k\varepsilon_j \|^2 = \sigma_k^j \|(-F)^{k-1}\varepsilon_j\|^2$ with
$$
\sigma_k^j = (2 \lambda_j + k -1) k
$$
for all $k \geq 1$.  (Here we used that the constant $q$ in
\cite[equation (6.33)]{Sig} equals
$\lambda_j(1-\lambda_j)$ by
\cite[Theorem 6.2]{Sig}.)  We write
$$
\boldsymbol{\sigma}^j_n = \prod_{k=1}^{n} \sigma^j_k
$$
which can be written in a compact form
\begin{equation}\label{nsigma}
\boldsymbol{\sigma}_n^j = ( (2\lambda_j)_n(1)_n),
\end{equation}
where $(x)_n = (x+1)\cdots (x+n-1)$.  We stipulate that the binomial
co-efficient $\binom{n}{k}$ as well as $(x)_{n-k}$ are both zero if
$n < k$.

The positivity of the normalizing constants $\big (
\boldsymbol{\sigma}_{n-j}^j\big )^{\frac{1}{2}}$ ($n \geq j$) is
equivalent to the existence of an inner product for which the set of
vectors $\mathbf{e}^j_{n-j}$ defined by the formula:
$$\mathbf{e}^j_{n-j} = (\boldsymbol{\sigma}_n^j)^{-\frac{1}{2}}u^j_{n-j}(z), ~n\geq j, ~0\leq j\leq m$$  forms an orthonormal set.   Of course, the positivity condition is fulfilled  if and only if $2\lambda > m$.

In this way, for fixed $j$, each $e^j_{n-j}$ has the same norm for
all $n\geq j$.  Hence the only possible choice for an orthonormal
system is $\{\mu_j e_{n-j}^j: n \geq j\}$ for some positive real
numbers $\mu_j > 0$ ($0 \leq j \leq m$). However, we may choose the
norm of the first vector, that is, the vector $\mathbf{e}^j_0$,
$0\leq j \leq m$, arbitrarily. Therefore, all the possible choices
for an orthonormal set are
\begin{eqnarray} \label{onbj}
\mu_j \mathbf{e}^j_{n-j}(z) &=& \frac{\mu_j}{\sqrt{(2\lambda - m +
2j)_{n-j}}\sqrt{(1)_{n-j}}} u^j_{n-j}(z),%\nonumber \\
%&& \qquad\qquad
\end{eqnarray}
$n\geq j, ~0\leq j\leq m,$ and $\mu_j,~0\leq j \leq m$ are $m+1$ arbitrary positive numbers.

Let us fix a positive real number $\lambda$ and $m\in \mathbb{N}$
satisfying $2 \lambda > m$. Let $\mathcal{H}^{(\lambda,{\boldsymbol
\mu})}$ denote the closed linear span of the vectors $\{\mu_j
e^j_{n-j}:~ 0\leq j \leq m, ~ n \geq j\}$. Then the Hilbert space
$\mathcal{H}^{(\lambda,{\boldsymbol \mu})}$ is the representation space
for $U$ defined in (\ref{eq:grp action}). Since the vectors
$u^j_n\perp u^k_p$ as long as $j\not= k$, it follows that the
Hilbert space $\mathcal{H}^{(\lambda,{\boldsymbol \mu})}$ is the
orthogonal direct sum $\oplus_{j=0}^m\, \frac{1}{\mu_j}
\mathcal{H}^{(j)}$.

We proceed to compute the reproducing kernel by using the
orthonormal system $\{\mu_j e_{n-j}^j: n \geq j\}$, $0 \leq j \leq
m$.
%\begin{Remark} \label{normalizing}
%\end{Remark}
We point out that for $0\leq \ell \leq m$, the entry
$\mathbf{e}^{\ell,j}_{n-j} z^{n-j}$ at the position $\ell$ of the
vector $\mathbf{e}^j_{n-j}(z)$ is $0$ for $n < \ell$.
Consequently, $\mathbf{e}^j_{n-j}$ is the zero vector unless $n
\geq j$. The set of vectors $\{\mu_j \mathbf{e}^j_{n-j}: 0\leq j
\leq m,~ n \geq j\}$ is orthonormal in the Hilbert space
$\mathcal{H}^{(\lambda,{\boldsymbol \mu})}$.  We note that
$$\mathbf{e}^{j}_{n-j}(z) = (\!( e^{\ell, j}_{n-j} z^{n-k}
)\!)_{\ell=0}^m\,,$$
\begin{eqnarray} \label{e}
\lefteqn{\big (\mathbf{e}^{j}_{n-j}(z) \big )_\ell}\nonumber \\
&=& \begin{cases} 0, & 0 \leq \ell \leq j-1 \\
\sqrt{\frac{(2\lambda + 2j -m + k)_{n-j-k}}{(1)_{n-j-k}}}
\sqrt{\frac{(n-j-k+1)_k}{(2\lambda + 2j -m)_k}}
\frac{(j+1)_k}{(1)_k} z^{n-k}, & j \leq \ell \leq m,~
k=\ell -j.
\end{cases}
\end{eqnarray}

We have under the hypothesis that we have a reproducing kernel
Hilbert space on which the representation $U$ is unitary, explicitly
determined an orthonormal basis for this space.  Now we are able to
answer the question of whether this space really exists.  For this
it is enough to show that $\sum e_n(z) \, \overline{e_n(w)}^{\rm
tr}$ converges pointwise, the sum then represents the reproducing
kernel for this Hilbert space.  We will sum the series explicitly,
and will verify that it gives exactly the kernels constructed in
\cite{KM}.
This will complete the program of this paper by proving that the
examples of \cite{KM}  
give all the homogeneous operators in
the Cowen-Douglas class\index{Cowen-Douglas class} whose
associated representation is multiplicity free.\index{multiplicity free}

%For this it would already suffice to show that $\sum e_n(z) \, \overline{e_n(w)}^{\rm tr}$ converges pointwise. This would be in fact be the reproducing kernel we are looking for.  We can do better, we can explicitly sum this series.

To compute the kernel function, it is convenient to set, for any $n
\geq 0$,
\begin{eqnarray}\label{G}
G({\boldsymbol \mu},n,z) &=& \begin{pmatrix}
\mu_0 e^{0,0}_n z^n & \hdots & 0 & \hdots & \ 0 \\
\vdots & \hdots & \vdots & \hdots & \vdots\\
\mu_0 e^{j,0}_{n}z^{n-j}&\hdots&\mu_j e^{j,j}_{n-j}z^{n-j}&\hdots &
0\\
\vdots & \hdots & \vdots & \hdots & \vdots \\
\mu_0e^{m,0}_n z^{n-m}&\hdots&\mu_j e^{m,j}_{n-j} z^{n-m}&\hdots&
\mu_m e^{m,m}_{n-m} z^{n-m}
\end{pmatrix} \nonumber\\
&=& \begin{pmatrix}
z^n &\hdots & 0\\
\vdots &\ddots&\vdots \\
0&\hdots & z^{n-m}
\end{pmatrix}
\begin{pmatrix}
e^{0,0}_n & \hdots & 0 & \hdots & \ 0 \\
\vdots & \hdots & \vdots & \hdots & \vdots\\
e^{j,0}_{n} &\hdots& e^{j,j}_{n-j} &\hdots &
0\\
\vdots & \hdots & \vdots & \hdots & \vdots \\
e^{m,0}_n & \hdots & e^{m,j}_{n-j} &\hdots& e^{m,m}_{n-m}
\end{pmatrix}
\begin{pmatrix}
\mu_0 &\hdots & 0\\
\vdots &\ddots&\vdots \\
0&\hdots & \mu_m
\end{pmatrix} \nonumber \\
&=& D_n(z) G(n) D({\boldsymbol \mu}),
\end{eqnarray}
where $D_n(z),~ D({\boldsymbol \mu})$ are the two diagonal matrices
and $G(n) = (\!\! ( e^{\ell, j}_{n-j} )\!\!)_{\ell, j = 0}^m$ with
$\mathbf{e}^{\ell, j}_{n-j} = 0$ if $\ell <j$ or  if $n < \ell$.
%We recall that $\e^{\ell, j}_{n-j} = 0$ if $n < \ell$.  The nonzero entries of the
 The nonzero entries of the lower triangular matrix $G(n)$, using (\ref{e}), are
\begin{eqnarray} \label{G(j+k,j)}
\lefteqn{G_{j+k,j}(n) =  \frac{\tbinom{n-j}{k} (j+1)_k (2\lambda-m + 2j
+k)_{n-j-k}} {\sqrt{(2\lambda-m+2j)_{n-j}} \sqrt{(1)_{n-j}}}} \nonumber\\
&=& \frac{\sqrt{(2\lambda - m +2j
+k)_{n-j-k}}}{\sqrt{(2\lambda-m+2j)_k}}
\frac{(n-j-k+1)_k}{\sqrt{(1)_{n-j}}}
\frac{(j+1)_k}{(1)_k}\nonumber \\
&=& \sqrt{\frac{(2\lambda - m +2j +k)_{n-j-k}}{(2\lambda-m+2j)_k}}
\sqrt{\frac{(n-j-k+1)_k}{(1)_{n-j-k}}}\frac{(j+1)_k}{(1)_k}
\end{eqnarray}
for $0 \leq k \leq m-j$.

Now, we are ready to compute the reproducing kernel $K_j$ for the
Hilbert space $\mathcal H^{(j)} = {\rm span}\{e^j_{n-j}:n \geq j\}$,
$0\leq j \leq m$. Recall that $K(z,w) = \sum_{n=0}^\infty e_n(z)
e_n(w)^*$ for any orthonormal basis $e_n,~n \geq 0$. This ensures
that $K$ is a {\em positive definite} kernel. For our computations,
we will use the particular orthonormal basis $\mathbf{e}^j_{n-j}$
as described in (\ref{onbj}). Since there are $j$ zeros at the top
of each of these basis vectors, it follows that $(\ell,p)$ will be
$0$ if either $\ell < j$ or $p < j$. We will compute $(\!\!(
K_j(z,w) )\!\!)$, at $(\ell,p)$ for $ j \leq \ell , p \leq m$. For
$\ell, p$ as above, we have
\begin{eqnarray*}
(\!\!( K_j(z,w) )\!\!)_{\ell,p} &=& \sum_{n\geq \mbox{\rm
max}(\ell,p)}^\infty \mathbf{e}^j_{n-j, \ell}(z)
\overline{\mathbf{e}^j_{n-j,p}(w)} \\
&=& \sum_{n\geq \mbox{\rm max}(\ell, p)}^\infty G_{\ell,j}(n)
G_{p,j}(n) z^{n-\ell}\bar{w}^{n-p}.
\end{eqnarray*}
We first simplify the co-efficient $G_{\ell,j}(n) G_{p,j}(n)$ of
$z^{n-\ell}\bar{w}^{n-p}$. The values of $G_{\ell,j}(n)$ are given
in (\ref{G(j+k,j)}). Therefore, we have
\begin{eqnarray*}
\lefteqn {G_{\ell,j}(n) G_{p,j}(n)}\\
&=& \Big (\frac{(2\lambda_j + \ell
-j)_{n-\ell}}{(2\lambda_j)_{\ell-j}}
\frac{(n-\ell+1)_{\ell-j}}{(1)_{n-\ell}} \frac{(2\lambda_j + p
-j)_{n-p}}{(2\lambda_j)_{p-j}} \frac{(n-\ell+1)_{\ell-j}}{(1)_{n-p}}
\Big )^{1/2}\\
&& \times \frac{(j+1)_{\ell-j}}{(1)_{\ell-j}}
\frac{(j+1)_{p -j}}{(1)_{p -j}}\\
&=& \frac{(2\lambda_j + p -j)_{n-p} (n-\ell+1)_{\ell-j}}
{(2\lambda_j)_{\ell-j}(1)_{n-p}} \Big ( \frac{(2\lambda_j+\ell
-j)_{p-\ell} (n-p+1)_{p-\ell}} {(2\lambda_j+\ell -j)_{p-\ell}
(n-p+1)_{p-\ell}} \Big )^{1/2}\\
&& \times \frac{(j+1)_{\ell-j}}{(1)_{\ell-j}}
\frac{(j+1)_{p -j}}{(1)_{p -j}}\\
&=& \frac{(2\lambda_j)_{p-j}(2\lambda_j + p -j)_{n-p}
(n-\ell+1)_{\ell-j} (n-p+1)_{p-j}} {(2\lambda_j)_{p-j}
(2\lambda_j)_{\ell-j}(1)_{n-p}(n-p+1)_{p-j}}
\frac{(j+1)_{\ell-j}}{(1)_{\ell-j}} \frac{(j+1)_{p -j}}{(1)_{p -j}}\\
&=& \frac{(2\lambda_j)_{n - j} (n-\ell+1)_{\ell-j}
(n-p+1)_{p-j}}{(2\lambda_j)_{p-j} (2\lambda_j)_{\ell-j}(1)_{n-j}}
\frac{(j+1)_{\ell-j}}{(1)_{\ell-j}} \frac{(j+1)_{p -j}}{(1)_{p -j}}.
\end{eqnarray*}

\begin{theorem} \label{kernel^lam_mu}
Given an arbitrary set $\mu_0, \ldots , \mu_m$ of positive numbers,
and $2\lambda > m$, we have
$$K^{(\lambda,{\boldsymbol \mu})}(z,w) = \sum_{j=0}^m \mu_j^2 K_j(z,w) =
\mathbf{B}^{(\lambda,{\boldsymbol \mu})}(z,w).$$ As a result, the
two Hilbert spaces ${\mathcal {H}}^{(\lambda,{\boldsymbol \mu})}$ and
$\mathbf{A}^{(\lambda,{\boldsymbol \mu})}$ of
\cite{KM} are equal.
\end{theorem}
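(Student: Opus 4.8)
The first equality, $K^{(\lambda,{\boldsymbol \mu})}=\sum_{j=0}^m \mu_j^2 K_j$, is really just a restatement of the orthogonal decomposition $\mathcal H^{(\lambda,{\boldsymbol \mu})}=\bigoplus_{j=0}^m \frac1{\mu_j}\mathcal H^{(j)}$ established just above the theorem: the reproducing kernel of an orthogonal direct sum of reproducing kernel Hilbert spaces of $\mathbb C^{m+1}$-valued functions on $\mathbb D$ is the sum of the kernels of the summands, and rescaling the norm on $\mathcal H^{(j)}$ so that $\mu_j\mathbf e^j_{n-j}$ (rather than $\mathbf e^j_{n-j}$) becomes orthonormal multiplies that summand's kernel by $\mu_j^2$. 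So the content lies in the second equality, and the plan is to compute $K_j$ in closed form, sum over $j$, and recognize $\mathbf B^{(\lambda,{\boldsymbol\mu})}$ of \cite{KM}.

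To compute $K_j$, fix $0\le j\le m$ and, using the Hermitian symmetry $K_j(z,w)^*=K_j(w,z)$, assume $\ell\ge p$, so that $(K_j(z,w))_{\ell,p}=\sum_{n\ge\ell}G_{\ell,j}(n)G_{p,j}(n)\,z^{n-\ell}\bar w^{n-p}$ with the product $G_{\ell,j}(n)G_{p,j}(n)$ already simplified above. Substituting $n=\ell+s$ and pulling out the $s$-independent Pochhammer and factorial factors, the remaining series is a Gauss series, which I expect to be ${}_2F_1\big(2\lambda_j+\ell-j,\ \ell-j+1;\ \ell-p+1;\ z\bar w\big)$ (here $\lambda_j=\lambda-\tfrac m2+j$). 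Euler's transformation ${}_2F_1(a,b;c;x)=(1-x)^{c-a-b}\,{}_2F_1(c-a,c-b;c;x)$ then rewrites it as $(1-z\bar w)^{-(2\lambda-m+\ell+p)}$ times a ${}_2F_1$ whose upper parameter $c-b=j-p\le 0$ is a non-positive integer, hence a polynomial in $z\bar w$ of degree $p-j$. This yields an explicit formula for $K_j$, and in particular shows that all the series involved converge pointwise on $\mathbb D\times\mathbb D$ (they are dominated by convergent geometric-type series in $z\bar w$), so that $K^{(\lambda,{\boldsymbol\mu})}$ is a genuine positive definite kernel and the Hilbert space $\mathcal H^{(\lambda,{\boldsymbol\mu})}$ really exists.

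It then remains to sum $\sum_{j=0}^{\min(\ell,p)}\mu_j^2\,(K_j(z,w))_{\ell,p}$ and identify the result with the $(\ell,p)$ entry of $\mathbf B^{(\lambda,{\boldsymbol\mu})}(z,w)$. Since every term carries the common factor $(1-z\bar w)^{-(2\lambda-m+\ell+p)}$, this is a finite polynomial identity in $z\bar w$ of bounded degree, which can be verified by comparing coefficients or, more efficiently, by the generating-function manipulations already used in \cite{KM} to introduce $\mathbf B^{(\lambda,{\boldsymbol\mu})}$. Once the two kernels coincide, the equality $\mathcal H^{(\lambda,{\boldsymbol\mu})}=\mathbf A^{(\lambda,{\boldsymbol\mu})}$ (as Hilbert spaces, with the same inner product) follows because a reproducing kernel Hilbert space is determined by its kernel; consequently the multiplication operators $M^{(\lambda,{\boldsymbol\mu})}$ on the two spaces, hence the homogeneous operators $(M^{(\lambda,{\boldsymbol\mu})})^*$, agree.

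The main obstacle I anticipate is exactly this last matching step: the closed form obtained here for $K_j$ and the formula for $\mathbf B^{(\lambda,{\boldsymbol\mu})}$ in \cite{KM} are packaged quite differently, and reconciling them requires some care with the Pochhammer bookkeeping and, in particular, with keeping the two triangular cases $\ell\ge p$ and $\ell\le p$ mutually consistent. A secondary point is normalization: the orthonormal vectors $\mathbf e^j_{n-j}$ were pinned down only up to the scalars $\mu_j$ and the global normalization $\tilde K(z,0)\equiv I$ from Section~2, so one must check that the present conventions are those under which $\mathbf B^{(\lambda,{\boldsymbol\mu})}$ was defined. Alternatively, one could bypass the explicit summation by invoking the uniqueness of quasi-invariant kernels from Section~2: it would suffice to verify that $K^{(\lambda,{\boldsymbol\mu})}$ and $\mathbf B^{(\lambda,{\boldsymbol\mu})}$ are quasi-invariant for the same (normalized) multiplier \eqref{genmult} and agree at $(0,0)$; but this still requires identifying the multiplier attached to $\mathbf B^{(\lambda,{\boldsymbol\mu})}$, so it is not obviously shorter than the direct computation.
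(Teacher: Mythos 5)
Your overall plan is sound and your hypergeometric computation is correct: setting $n=\ell+s$ in $\sum_{n\ge\ell}G_{\ell,j}(n)G_{p,j}(n)z^{n-\ell}\bar w^{n-p}$ does produce $(2\lambda_j)_{\ell-j}(\ell-p+1)_{p-j}\,{}_2F_1(2\lambda_j+\ell-j,\ell-j+1;\ell-p+1;z\bar w)$ times the diagonal factors, and Euler's transformation gives $(1-z\bar w)^{-(2\lambda-m+\ell+p)}$ times a terminating series, which also settles pointwise convergence. But the paper takes a shorter route precisely at the step you flag as the ``main obstacle.'' Instead of extracting a ${}_2F_1$ closed form and then trying to reconcile it with $\mathbf B^{(\lambda,\boldsymbol\mu)}$, the paper compares the coefficients of $K_j$ directly with those of the matrix $\tilde{\mathbf B}^{(\lambda_j)}(z,w)=(\!(\partial^{\ell-j}\bar\partial^{p-j}(1-z\bar w)^{-2\lambda_j})\!)_{j\le\ell,p\le m}$ of mixed derivatives of the scalar kernel. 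Expanding $(1-z\bar w)^{-2\lambda_j}$ and differentiating termwise, the $(\ell,p)$ entry of $\tilde{\mathbf B}^{(\lambda_j)}$ is $\sum_{n\ge\max(\ell,p)}\frac{(2\lambda_j)_{n-j}}{(1)_{n-j}}(n-\ell+1)_{\ell-j}(n-p+1)_{p-j}z^{n-\ell}\bar w^{n-p}$, which matches $G_{\ell,j}(n)G_{p,j}(n)$ up to the $n$-independent factor $\frac{1}{(2\lambda_j)_{\ell-j}}\frac{(j+1)_{\ell-j}}{(1)_{\ell-j}}$ and its analogue in $p$; hence $K_j=D_j\tilde{\mathbf B}^{(\lambda_j)}D_j$ for an explicit diagonal $D_j$. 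Since $\mathbf B^{(\lambda_j)}$ is \emph{defined} in \cite{KM} exactly as this conjugated derivative matrix, the identification is immediate and no polynomial identity needs to be checked. So the two approaches are computationally equivalent, but yours, as written, stops one step short: the decisive matching with $\mathbf B^{(\lambda,\boldsymbol\mu)}$ is deferred to ``comparing coefficients,'' whereas choosing the derivative-matrix form as the comparison target makes that step definitional. Your treatment of the first equality (additivity of kernels over the orthogonal decomposition, with the $\mu_j^2$ rescaling) agrees with the paper.
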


\begin{proof}
We  now compare the co-efficients  $(\!\!( K_j(z,w) )\!\!)_{\ell,p}$
with that of a known Kernel.  Let $B^{\lambda_j}(z,w) =
(1-z\bar{w})^{-2\lambda_j}$, where $B(z,w) = (1-z\bar{w})^{-2}$ is
the Bergman kernel on the unit disc.  We let $\partial$ and
$\bar{\partial}$ denote differentiation with respect to $z$ and
$\bar{w}$ respectively. Put
$$
\tilde{\mathbf{B}}^{(\lambda_j)}(z,w) = (\!\!(
\partial^{\ell-j}\bar{\partial}^{p-j} (1-z\bar{w} )^{-2\lambda_j}
)\!\!)_{j \leq \ell, p \leq m}.
$$
We expand the entry at the position $(\ell, p)$ of
$\tilde{\mathbf{B}}^{(\lambda_j)}(z,w)$ to see that
\begin{eqnarray*}
\lefteqn{(\!\!(\tilde{\mathbf{B}}^{(\lambda_j)}(z,w) )\!\!)_{\ell,p}}\\
 &=& \sum_{\nu \geq \mbox{\rm max}(\ell-j,p-j)}
\frac{(2\lambda_j)_{\nu}}{(1)_{\nu}} (\nu-\ell+j+1)_{\ell-j} (\nu +
j - p +1)_{p-j}
z^{\nu-(\ell-j)}\bar{w}^{\nu-(p-j)}\\
&=& \sum_{n \geq \mbox{\rm max}(\ell,p)}
\frac{(2\lambda_j)_{n-j}}{(1)_{n-j}} (n -\ell +1)_{\ell-j} (n - p
+1)_{p-j} z^{ n - \ell }\bar{w}^{n - p },
\end{eqnarray*}
where we have set $n=m+j$.  Comparing these coefficients with that
of $G_{\ell,j}(n) G_{p,j}(n)$, we find that
\begin{equation}\label{kernel^beta_j}
K_j(z,w) = D_j \tilde{\mathbf{B}}^{(\lambda_j)}(z,w) D_j,
\end{equation}
\mbox{~where~} $D_j$ is a diagonal matrix with
$\frac{1}{(2\lambda_j)_{\ell-j}}
\frac{(j+1)_{\ell-j}}{(1)_{\ell-j}}$ at the $(\ell, \ell)$ position
with $j \leq \ell \leq m$. Hence $K_j(z,w) =
\mathbf{B}^{(\lambda_j)}(z,w)$ which was defined in the equation
(\cite[equation (4.3)]{KM}).

Clearly, we can add up the kernels $K_j$ to obtain the kernel
$K^{(\lambda, {\boldsymbol \mu})}$ for the Hilbert space
$\mathcal{H}^{(\lambda, {\boldsymbol \mu})} = \oplus_{j=0}^m
\frac{1}{\mu_j} {\mathcal {H}}^{(j)}$.  Hence the proof of the theorem
is complete.
\end{proof}

\begin{Corollary}
The irreducible homogeneous operators in the Cowen - Douglas class
whose associated representation is multiplicity free\index{multiplicity free}
are exactly the
adjoints of $M^{(\lambda, {\boldsymbol \mu})}$ constructed in
\cite{KM}.
\end{Corollary}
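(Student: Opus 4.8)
The plan is to assemble the threads of Sections 3--5 and read off the classification, the real work having already been done in the earlier sections. I would start with an irreducible operator $T$ in ${\rm B}_{m+1}(\mathbb D)$ whose associated representation of $\tilde G$ is multiplicity-free. By \cite{KM} the eigenbundle of $T$ is a homogeneous holomorphic Hermitian vector bundle on $\mathbb D$, and by holomorphic induction it is determined by an irreducible (in the sense of Section~3) representation $\varrho$ of the pair $(\mathfrak t,\tilde{\mathbb K})$, so that $\varrho(h)$ is a real diagonal $(m+1)\times(m+1)$ matrix whose eigenvalues form a consecutive run $-\eta,-(\eta+1),\dots,-(\eta+m)$. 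The infinitesimal computation culminating in \eqref{missing} shows that the section space carries the unitary representation $U=\bigoplus_j U^{\eta+j}$, the discrete series summand $U^{\eta+j}$ occurring with multiplicity equal to the number of times $-(\eta+j)$ appears on the diagonal of $\varrho(h)$. Hence ``multiplicity-free'' is exactly the assertion that these eigenvalues are pairwise distinct, i.e.\ that $\varrho(h)$ has spectrum $\{-\lambda+\tfrac m2,\dots,-\lambda-\tfrac m2\}$ with $\lambda=\eta+\tfrac m2$.

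Next I would normalize as in Section~4: using the freedom to conjugate the bundle realization by a diagonal matrix, together with \eqref{triangalgcommrel} and irreducibility (which forces the relevant subdiagonal entries of $\varrho(y)$ to be nonzero), I may assume $\varrho(y)=S_m$. The section space is then a reproducing-kernel Hilbert space on which $U$ acts unitarily, so by the representation theory of $\mathrm{SL}(2,\mathbb R)$ the vectors $(-F)^k\varepsilon_j$ ($k\ge0$, $0\le j\le m$) are orthogonal with the norms $\boldsymbol\sigma^j_k$ recorded above, and the only possible orthonormal systems are $\{\mu_j\mathbf e^j_{n-j}: n\ge j\}$ with $\mu_0,\dots,\mu_m>0$ arbitrary; positivity of the normalizing constants forces $2\lambda>m$. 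Thus every such $T$ has section (hence reproducing-kernel) Hilbert space $\mathcal H^{(\lambda,{\boldsymbol \mu})}$ for some $\lambda>m/2$ and some ${\boldsymbol \mu}>0$.

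I would then invoke Theorem~\ref{kernel^lam_mu}: the reproducing kernel of $\mathcal H^{(\lambda,{\boldsymbol \mu})}$ is $\mathbf{B}^{(\lambda,{\boldsymbol \mu})}$, whence $\mathcal H^{(\lambda,{\boldsymbol \mu})}=\mathbf A^{(\lambda,{\boldsymbol \mu})}$ and $T=(M^{(\lambda,{\boldsymbol \mu})})^*$. For the reverse inclusion, Theorem~\ref{thkm1} already guarantees that each $(M^{(\lambda,{\boldsymbol \mu})})^*$ with $\lambda>m/2$ is irreducible and lies in ${\rm B}_{m+1}(\mathbb D)$, while the construction of Section~5 exhibits its associated representation as $\bigoplus_{j=0}^m U^{\lambda-m/2+j}$, which is multiplicity-free; replacing ${\boldsymbol \mu}$ by a positive scalar multiple only rescales the inner product and leaves the operator unchanged, so the normalization $\mu_0=1$ used in \cite{KM} is harmless. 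The two inclusions together give the Corollary.

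The one point where I expect to have to be a little careful --- everything else being bookkeeping already carried out in the text --- is the equivalence ``associated representation multiplicity-free $\Longleftrightarrow$ eigenvalues of $\varrho(h)$ distinct''. One must check that the representation $\pi$ associated to $T$ in the sense of \cite{KM} is indeed unitarily equivalent to the representation $U$ manufactured from the bundle via \eqref{eq:homker def}, and that the Cowen--Douglas isomorphism of \cite{C-D} between $T$ and $(M^{(\lambda,{\boldsymbol \mu})})^*$ is implemented by the same isometry intertwining these two copies of $U$; this rests on the uniqueness clause of \cite{C-D} together with Proposition~\ref{pkm1}. Once that is in place, no boundedness or Cowen--Douglas membership needs to be re-verified here, since those facts are supplied by Theorem~\ref{thkm1}.
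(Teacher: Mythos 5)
Your proposal is correct and follows essentially the same route as the paper: the paper's own proof is a brief summary that assembles the classification carried out in Sections 3--5 (multiplicity-free forces distinct eigenvalues of $\varrho(h)$, normalization $\varrho(y)=S_m$, the forced orthonormal system $\{\mu_j\mathbf e^j_{n-j}\}$) and then invokes Theorem~\ref{kernel^lam_mu} to identify the kernel with $\mathbf B^{(\lambda,\boldsymbol\mu)}$, citing \cite{KM} for irreducibility. You are somewhat more explicit than the paper about the reverse inclusion and about identifying the associated representation of $T$ with the representation $U$ built from the bundle, but these are points the paper implicitly relies on rather than a different argument.
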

\begin{proof}
In our discussion up to here we proved that the Hilbert space
$\mathcal H^{(\lambda, {\boldsymbol \mu})}$ corresponding to a
homogeneous operator in the Cowen - Douglas class\index{Cowen-Douglas class} has a reproducing
kernel given by $K^{(\lambda, {\boldsymbol \mu})}=\sum_0^m \mu_j^2
K_j$, $2\lambda
> 1,\: \mu_1, \ldots , \mu_m >0$. It follows from the Theorem that
the kernels obtained this way are the same as (are equivalent to)
the kernels constructed in \cite{KM}.
These operators were shown to
be irreducible \cite{KM}.
\end{proof}

We now consider the action of the multiplication operator
$M^{(\lambda,{\boldsymbol \mu})}$ on the Hilbert space
$\mathcal{H}^{(\lambda,{\boldsymbol \mu})}$. Let $\mathcal{H}(n)$ be the
linear span of the vectors
$$\{\mathbf{e}^0_{n}(z), \ldots, \mathbf{e}^j_{n-j}(z), \ldots , \mathbf{e}^m_{n-m}(z)\},$$
where as before, for $0\leq \ell \leq m$,
$\mathbf{e}^j_{n-\ell}(z)$ is zero if $n-\ell < 0$. Clearly,
${\mathcal H}^{(\lambda,{\boldsymbol \mu})} = \oplus_{n=0}^\infty {\mathcal
H}(n)$. We have
\begin{eqnarray*}
z G(n, z) &=&  D_n(z) G(n) D({\boldsymbol \mu})\\
&=&  D_{n+1}(z) G(n) D({\boldsymbol \mu})\\
&=& D_{n+1}(z) G(n+1)D({\boldsymbol \mu}) \big (D({\boldsymbol
\mu})^{-1} G(n+1)^{-1} G(n) D({\boldsymbol \mu}) \big ).
\end{eqnarray*}
If we let $W(n) = D({\boldsymbol \mu})^{-1} G(n+1)^{-1} G(n)
D({\boldsymbol \mu})$, then we see that $z {\mathbf e}_{n-j}^j(z)
= G({\boldsymbol \mu}, n+1, z) W_j(n)$, where $W_j(n)$ is the $j$th
column of the matrix $W(n)$. It follows that the operator
$M^{(\lambda,{\boldsymbol \mu})}$ defines a block shift $W$ on the
representation space $\mathcal{H}^{(\lambda,{\boldsymbol \mu})}$. The
block shift $W$ is defined by the requirement that $W:\mathcal H(n)
\to \mathcal H(n+1)$ and $W_{|\mathcal{H}(n)} = W_n^{\mathrm tr}$.

Here, we have a construction of the representation space $\mathcal
H^{(\lambda, {\boldsymbol \mu})}$ along with the matrix
representation of the operator $M^{(\lambda,{\boldsymbol \mu})}$
which is independent of the corresponding results from
\cite{KM}. 

\section{Examples}

Recall that $G({\boldsymbol \mu}, n,z) =  D_n(z) G(n) D({\boldsymbol
\mu})$. Once we determine the matrix $G(n)$ explicitly, we can
calculate both the block weighted shift and the kernel function.

%\subsection{}
We discuss these calculations in the particular case of  $m=1$.
First, it is easily seen that
\begin{equation} \label{G2}
G(n) = \begin{pmatrix}
\big ( \frac{(2\lambda - 1)_n}{(1)_n} \big )^{1/2} & 0 \\
(\frac{n}{2\lambda - 1})^{1/2} \big (
\frac{(2\lambda)_{n-1}}{(1)_{n-1}} \big )^{1/2} &
\big ( \frac{(2\lambda +1)_{n-1}}{(1)_{n-1}} \big )^{1/2}\\
\end{pmatrix}.
\end{equation}
The block  $W_n$ of the weighted shift $W$ is
\begin{equation} \label{W2}
W_n =  \begin{pmatrix}
(\frac{n+1}{2\lambda + n - 1})^{1/2} & 0\\
-\frac{1}{\mu_1} (\frac{2 \lambda}{2\lambda -1})^{1/2} (\frac{1}
{(2\lambda+n-1)(2\lambda +n)})^{1/2} & (\frac{n}{2 \lambda+n})^{1/2}
\end{pmatrix}.
\end{equation}

Finally, the reproducing kernel $K^{(\lambda, {\boldsymbol \mu})}$
with $m=1$ is easily calculated:
\begin{equation} \label{K2}
K^{(\lambda,{\boldsymbol \mu})}(z,w) = \begin{pmatrix}
\frac{1}{(1-\bar{w}z)^{2\lambda-1}}& \frac{z}{(1-\bar{w}z)^{2 \lambda}}\\
\frac{\bar{w}}{(1-\bar{w}z)^{2 \lambda}} & \frac{1}{2\lambda -1}
\frac{1+(2\lambda-1)\bar{w}z}{(1-\bar{w}z)^{2\lambda+1}}
\end{pmatrix} + \mu_1^2 \begin{pmatrix}
0&0\\
0&\frac{1}{(1-\bar{w}z)^{2\lambda +1}}
\end{pmatrix}.
\end{equation}

One might continue the explicit calculations, as above, in the
particular case of $m=2$ as well. We begin with the matrix 
\begin{equation}\label{G3}
G(n) = \begin{pmatrix}
\big ( \frac{(2\lambda-2)_n}{(1)_n} \big )^{1/2} & 0 & 0\\
(\frac{n}{2\lambda-2})^{1/2} \big (\frac{(2 \lambda
-1)_{n-1}}{(1)_{n-1}}\big )^{1/2}
 & \big (\frac{(2\lambda)_{n-1}}{(1)_{n-1}}\big )^{1/2} & 0\\
(\frac{n(n-1)}{(2\lambda-2)(2\lambda-1)} )^{1/2} \big
(\frac{(2\lambda)_{n-2}}{(1)_{n-2}} \big )^{1/2} &
 2 (\frac{n-1}{2\lambda})^{1/2} \big ( \frac{(2\lambda+1)_{n-2}}{(1)_{n-2}}
\big)^{1/2} & \big (\frac{ (2\lambda +2)_{n-2}}{(1)_{n-2}} \big )^{1/2}\\
\end{pmatrix}.
\end{equation}
The block  $W_n$ of the weighted shift $W$, in this case, is
\begin{equation}\label{W3}
\begin{pmatrix}
\big (\frac{n+1}{2\lambda + n -2}\big )^{1/2} & 0 & 0\\
\frac{-1}{\mu_1} \big (\frac{2\lambda-1}{2 \lambda - 2}\big )^{1/2}
\big (\frac{1}{(2\lambda+n-1)(2\lambda+n-2)}\big )^{1/2} &
\big (\frac{n}{2\lambda+n-1} \big )^{1/2} & 0\\
\frac{-2}{\mu_2}\big ( \frac{2\lambda+1}{(2\lambda -2)_3 } \big
)^{1/2} \big ( \frac{n}{(2\lambda+n-2)_3} \big )^{1/2} &
\frac{-2\mu_1}{\mu_2} \big (\frac{2\lambda+1}{2\lambda} \big )^{1/2}
\big (\frac{1}{(2 \lambda+n-1)(2\lambda+n)}\big )^{1/2} & \big
(\frac{n-1}{2\lambda+n}\big )^{1/2}
\end{pmatrix}.
\end{equation}
Finally, the reproducing kernel $K^{(\lambda,{\boldsymbol \mu})}$
with $m=2$ has the form:
\begin{eqnarray}\label{K3}
%\lefteqn{
K^{(\lambda,{\boldsymbol \mu})}(z,w) &=&
\begin{pmatrix}
\frac{1}{(1-\bar{w}z)^{2\lambda-2}}&\frac{z}{(1-\bar{w}z)^{2\lambda-1}}&
\frac{z^2}{(1-\bar{w}z)^{2\lambda}} \\
\frac{\bar{w}}{(1-\bar{w}z)^{2\lambda-1}}&\frac{1+(2\lambda-2)\bar{w}z}
{(2\lambda-2)(1-\bar{w}z)^{2\lambda}}& \frac{z (2 +
(2\lambda-2)\bar{w}z)}
{(2\lambda - 2) (1-\bar{w}z)^{2\lambda + 1}}\\
{ \frac{\bar{w}^2}{(1-\bar{w}z)^{2\lambda}} }& \frac{\bar{w}(2 +
(2\lambda - 2)\bar{w}z)} {(2\lambda-2) (1-\bar{w}z)^{2\lambda+1}}&
\frac{2 + 4(2\lambda -1)\bar{w}z + (2\lambda - 1)(2\lambda -
2)z^2\bar{w}^2} {(2\lambda -1)(2\lambda-2)
(1-\bar{w}z)^{2\lambda+2}}
\end{pmatrix} \nonumber\\
%} \nonumber\\
&& + \,\mu_1^2 \begin{pmatrix}
0&0&0\\
0&\frac{1}{(1-\bar{w}z)^{2\lambda}}& 2 \frac{ z}
{(1-\bar{w}z)^{2\lambda +1}} \\
0&2 \frac{ \bar{w}} {(1-\bar{w}z)^{2\lambda+1}} &2
\frac{2}{2\lambda}
\frac{1+2\lambda\bar{w}z}{(1-\bar{w}z)^{2\lambda+2}} \\
\end{pmatrix} \nonumber\\
&& + \,\mu_2^2 \begin{pmatrix}
0&0&0\\
0&0&0\\
0&0&\frac{1}{(1-\bar{w}z)^{2\lambda+2}}\\
\end{pmatrix}.
\end{eqnarray}

\end{document}